\documentclass{amsart}
\usepackage[dvips]{color}
\usepackage{amssymb}
\usepackage[active]{srcltx}

\setlength{\oddsidemargin}{0in}
\setlength{\evensidemargin}{0in}
\setlength{\textwidth}{6.42in}
\setlength{\textheight}{8in}
\topskip 0in
\topmargin 0.375in
\footskip 0.25in

\author[P. Daskalopoulos]{Panagiota Daskalopoulos}
\address{Panagiota Daskalopoulos\\ Department of Mathematics, Columbia University\\ 2990 Broadway
New York, NY 10027, USA}
\email{pdaskalo@math.columbia.edu}

\author[T. Kuusi]{Tuomo Kuusi}
\address{Tuomo Kuusi\\Aalto University
Institute of Mathematics
\\ P.O. Box 111000
FI-00076 Aalto,
Finland}
\email{tuomo.kuusi@tkk.fi}

\author[G. Mingione]{Giuseppe Mingione}
\address{Giuseppe Mingione\\Dipartimento di Matematica, Universit\`a di Parma\\
Parco Area delle Scienze 53/a, Campus, 43124 Parma, Italy}
\email{giuseppe.mingione@unipr.it.}

\allowdisplaybreaks

\newtheorem{theorem}{Theorem}[section]

\newtheorem{lemma}{Lemma}[section]

\theoremstyle{definition}

\newtheorem{remark}{Remark}[section]

\numberwithin{equation}{section}

\newcommand{\rif}[1]{(\ref{#1})}

\def\eqn#1$$#2$${\begin{equation}\label#1#2\end{equation}}
\def\charfn_#1{{\raise1.2pt\hbox{$\chi
_{\kern-1pt\lower3pt\hbox{{$\scriptstyle#1$}}}$}}}

\def\qq1{q_*}
\def\q2{q_{**}}
\def\dist{\operatorname{dist}}

\def\ep{\varepsilon}

\def\er{\mathbb R}

\newdimen\vintbar
\vintbar12pt
\def\vint{-\kern-\vintbar\int}

\def\0{\boldsymbol 0}

\newcommand{\MM}{\mathcal{M}}

 \newcommand{\mean}[1]{-\hskip-1.08em\int_{#1}}

\newcommand{\trif}[1] {\textnormal{\rif{#1}}}

\newtoks\by
\newtoks\paper
\newtoks\book
\newtoks\jour
\newtoks\yr
\newtoks\pages
\newtoks\vol
\newtoks\publ
\def\et{ \& }
\def\name[#1, #2]{#1 #2}
\def\ota{{\hbox{\bf ???}}}
\def\cLear{\by=\ota\paper=\ota\book=\ota\jour=\ota\yr=\ota
\pages=\ota\vol=\ota\publ=\ota}
\def\endpaper{\the\by, \textit{\the\paper},
{\the\jour} \textbf{\the\vol} (\the\yr), \the\pages.\cLear}
\def\endbook{\the\by, \textit{\the\book},
\the\publ, \the\yr.\cLear}
\def\endpap{\the\by, \textit{\the\paper}, \the\jour.\cLear}
\def\endproc{\the\by, \textit{\the\paper}, \the\book, \the\publ,
\the\yr, \the\pages.\cLear}

\def\mean#1{\mathchoice%
          {\mathop{\kern 0.2em\vrule width 0.6em height 0.69678ex depth -0.58065ex
                  \kern -0.8em \intop}\nolimits_{\kern -0.4em#1}}%
          {\mathop{\kern 0.1em\vrule width 0.5em height 0.69678ex depth -0.60387ex
                  \kern -0.6em \intop}\nolimits_{#1}}%
          {\mathop{\kern 0.1em\vrule width 0.5em height 0.69678ex
              depth -0.60387ex
                  \kern -0.6em \intop}\nolimits_{#1}}%
          {\mathop{\kern 0.1em\vrule width 0.5em height 0.69678ex depth -0.60387ex
                  \kern -0.6em \intop}\nolimits_{#1}}}

\def\vintslides_#1{\mathchoice%
          {\mathop{\kern 0.1em\vrule width 0.5em height 0.697ex depth -0.581ex
                  \kern -0.6em \intop}\nolimits_{\kern -0.4em#1}}%
          {\mathop{\kern 0.1em\vrule width 0.3em height 0.697ex depth -0.604ex
                  \kern -0.4em \intop}\nolimits_{#1}}%
          {\mathop{\kern 0.1em\vrule width 0.3em height 0.697ex depth -0.604ex
                  \kern -0.4em \intop}\nolimits_{#1}}%
          {\mathop{\kern 0.1em\vrule width 0.3em height 0.697ex depth -0.604ex
                  \kern -0.4em \intop}\nolimits_{#1}}}

\newcommand{\aveint}[2]{\mathchoice%
          {\mathop{\kern 0.2em\vrule width 0.6em height 0.69678ex depth -0.58065ex
                  \kern -0.8em \intop}\nolimits_{\kern -0.45em#1}^{#2}}%
          {\mathop{\kern 0.1em\vrule width 0.5em height 0.69678ex depth -0.60387ex
                  \kern -0.6em \intop}\nolimits_{#1}^{#2}}%
          {\mathop{\kern 0.1em\vrule width 0.5em height 0.69678ex depth -0.60387ex
                  \kern -0.6em \intop}\nolimits_{#1}^{#2}}%
          {\mathop{\kern 0.1em\vrule width 0.5em height 0.69678ex depth -0.60387ex
                  \kern -0.6em \intop}\nolimits_{#1}^{#2}}}

\newcommand{\eps}{\varepsilon}

\newcommand{\ern}{\mathbb{R}^n}

 \setcounter{tocdepth}{1}



\title{Borderline estimates for fully nonlinear elliptic equations}
\begin{document}

\maketitle

\begin{abstract} We prove new borderline regularity results for solutions to fully nonlinear elliptic equations together with pointwise gradient potential estimates.
\end{abstract}

\section{Introduction} The aim of this 
paper is to give borderline regularity results and potential estimates for viscosity solutions to fully nonlinear elliptic equations of the type
\eqn{fully}
$$
F(x,D^2u)=f
$$
in $\Omega$. Here and in the following $\Omega$ will denote  an open subset of $\ern$, $n\geq 2$. The final outcome is that classical sharp results valid for the Poisson equation $-\triangle u =f$ and {\em allowing to find the best function space conditions} on $f$ in order to guarantee certain borderline regularity properties of $Du$  such as boundedness, continuity, BMO/VMO-regularity etc, are extended verbatim to the fully nonlinear case \rif{fully}. This will follow from the fact that classical pointwise estimates for solutions via linear potentials will here find a suitable analog in the fully nonlinear
situation.

First of all, let us introduce the general setting; throughout the paper, according to \cite{CC}, we shall assume the ellipticity and growth conditions
\begin{equation} \label{ell}
    \mathcal{P}^-(X-Y) \leq F(x,X)-F(x,Y) \leq \mathcal{P}^+(X-Y)\,, 
\end{equation}
where $x \in \Omega$ and $X,Y \in \mathcal{S}(n)$ are symmetric square matrices and $\mathcal{P}^-$ and $ \mathcal{P}^+$ are the standard Pucci's extremal operators defined as
\[
 \mathcal{P}^-(X) = \lambda \sum_{\lambda_j > 0} \lambda_j +  \Lambda \sum_{\lambda_j < 0} \lambda_j \,, \qquad
 \mathcal{P}^+(X) = \Lambda \sum_{\lambda_j > 0} \lambda_j +  \lambda \sum_{\lambda_j < 0} \lambda_j\,,
\]
$(\lambda_j)_{j=1}^n$ being the eigenvalues of $X \in \mathcal{S}(n)$ and $\Lambda\geq \lambda>0$. For basic properties of Pucci's operators and basic notation on fully nonlinear elliptic equations we refer to \cite{CC}.  For simplicity we shall consider the additional assumption $F(x,0)=0$, which is actually not restrictive for the kind of results we are going to prove here.

The analysis of fully nonlinear equations can be carried out only starting from a certain regularity of the source term as widely explained in \cite{C, CC, E}. More precisely, a suitable assumption for regularity is
$$f \in L^{p}(\Omega)\,, \qquad \mbox{where}\  p> n_{\rm{E}} \in (n/2,n)$$ and the exponent $n_{\rm{E}}$ depends only on $n$ and the structure/ellipticity constants $\lambda, \Lambda$; this essentially follows joining the basic analysis of Caffarelli \cite{C} - who considered $f \in L^n$, which is natural to apply the ABP principle - with suitable reverse H\"older inequalities valid for 
the fundamental solutions to certain linear elliptic equations, as shown by Escauriaza in~\cite{E}. Henceforth, in this paper, {\em we shall consider only the case $f \in L^p$ for $p > n_{\rm{E}}$}. 
Moreover, in the following, after defining $f\equiv 0$ outside $\Omega$, we shall assume, without loss of generality,  that $f \in L^{p}(\er^n)$.

Our leading regularity assumption on $x \mapsto F(x, \cdot)$, initially assumed to be measurable, generalizes that of plain continuity; we define
$$
\omega(R):= \sup_{\varrho \leq R}   \sup_{y \in \Omega, Y \in \mathcal{S}(n) \setminus \{0\}} \mean{B_{\varrho}(y)} \frac{|F(x,Y)-(F)_{B_\varrho(y)}(Y)|}{|Y|}\, dx\,,
$$
where the averaged operator $(F)_{B_\varrho(y)}(Y)$ is defined as $$ \quad (F)_{B_\varrho(y)}(Y) := \mean{B_{\varrho}(y)} F(x,Y)\, dx\,.$$ Note that the averaged operator satisfies~\rif{ell} whenever the original one $F$ does. We say that $F(\cdot)$ has $\theta$-BMO coefficients for positive $\theta$ if there exists a positive radius $R_\theta$ such that $\omega(R_\theta)\leq \theta$. 
Notice that this formulation of coefficient regularity is slightly different from the original ones considered in \cite{C, CC, S} but the definition considered here leads to similar considerations (observe that $x \mapsto F(x, \cdot)$ is bounded by \rif{ell}).

Finally, we recall that, due to the low integrability of the datum $f \in L^{p}$ and to the low regularity of the $x$-coefficients of $F(\cdot)$, in this paper we shall always be dealing with $L^p$-viscosity solutions of $F(x,D^2u)=f$ in the sense specified in \cite{CCKS, S}.

The type of results we are discussing here are concerned with borderline gradient regularity which can be considered as a limit case of those proved in \cite{C, CC, T1, T2}. The first main result in this respect is the following:

\begin{theorem}[Lorentz space regularity]\label{mainc} Let $u$ be an $L^p$-viscosity solution to the equation \linebreak $F(x,D^2u)=f$, $p>n_E$.
There is $\theta \in (0,1)$ depending only on $n,\lambda,\Lambda$ such that if $F(\cdot)$ has $\theta$-BMO coefficients and $f \in L(n,1)$, that is if
$$
\int_0^{\infty} |\{x\in \Omega \, : \, |f(x)|> \lambda \}|^{1/n}\, d \lambda <\infty\,,
$$
then $Du$ is continuous.
\end{theorem}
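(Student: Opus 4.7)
The plan is to reduce the continuity of $Du$ to a pointwise gradient potential estimate: for every Lebesgue point $x$ of $Du$ in a ball $B_r(x)\subset\Omega$, one should be able to prove
$$
|Du(x)-(Du)_{B_r(x)}| \;\le\; c\,\mean{B_r(x)}|Du|\,dy + c\,\mathbf{I}_1^{|f|}(x,2r),
$$
where the truncated first-order Riesz potential is
$$
\mathbf{I}_1^{|f|}(x,R) := \int_0^R \frac{|f|(B_\varrho(x))}{\varrho^{n-1}}\,\frac{d\varrho}{\varrho}.
$$
Once this is in hand, the conclusion follows from the classical fact that $f\in L(n,1)$ makes $x\mapsto \mathbf{I}_1^{|f|}(x,R)$ continuous for fixed $R$, and forces $\sup_x \mathbf{I}_1^{|f|}(x,R)\to 0$ uniformly as $R\downarrow 0$; a two-point application of the pointwise estimate together with a Cauchy-type argument then gives continuity of $Du$.

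The pointwise estimate is derived by a Caffarelli--Escauriaza perturbation scheme. Fix $x_0\in\Omega$ and $r\le R_\theta$, and compare $u$ with the $L^p$-viscosity solution $v$ of the \emph{averaged} homogeneous problem $(F)_{B_r(x_0)}(D^2 v)=0$ in $B_r(x_0)$, $v=u$ on $\partial B_r(x_0)$. Since the averaged operator still satisfies~\rif{ell}, Caffarelli's interior regularity yields $v\in C^{1,\alpha_0}_{\rm loc}$ with scaling-invariant $L^1$-to-$C^{1,\alpha_0}$ bounds on $B_{r/2}(x_0)$, for some universal $\alpha_0\in(0,1)$. The difference $u-v$ solves Pucci inequalities whose right-hand side is $|f|+|F(x,D^2v)-(F)_{B_r}(D^2 v)|$; the second term is small in the $\theta$-BMO sense and is absorbed after an $L^\varepsilon$-estimate. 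ABP-Escauriaza then gives
$$
\mean{B_{r/2}(x_0)}|D(u-v)|\,dx \;\le\; c\,\theta\,\mean{B_r(x_0)}|Du|\,dx + c\,\frac{|f|(B_r(x_0))}{r^{n-1}}.
$$
Setting $E(x_0,\varrho):=\mean{B_\varrho(x_0)}|Du-(Du)_{B_\varrho(x_0)}|\,dy$, the $C^{1,\alpha_0}$ bound on $Dv$ transfers to an excess decay
$$
E(x_0,\sigma r)\;\le\; c\sigma^{\alpha_0}E(x_0,r)+c\sigma^{-n}\theta\mean{B_r(x_0)}|Du|\,dy+c\sigma^{-n}\frac{|f|(B_r(x_0))}{r^{n-1}}
$$
for every $\sigma\in(0,1/4)$. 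Choosing $\sigma$ so that $c\sigma^{\alpha_0}\le 1/4$ and then $\theta$ small enough to absorb the second term fixes the smallness threshold on $\theta$; a dyadic iteration on the radii $\sigma^j r$, followed by summation of the resulting geometric series, delivers the desired pointwise potential estimate.

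The elementary Lorentz-tail bound $\mathbf{I}_1^{|f|}(x,R)\le c \int_0^\infty |\{|f|>\lambda\}|^{1/n}\,d\lambda$, combined with dominated convergence, gives uniform vanishing of $\mathbf{I}_1^{|f|}(\cdot,R)$ as $R\to 0$ and continuity of $x\mapsto\mathbf{I}_1^{|f|}(x,R)$ for fixed $R$. Applying the pointwise estimate at two nearby points $x_1,x_2$, and controlling the difference $(Du)_{B_r(x_1)}-(Du)_{B_r(x_2)}$ through $L^1$-continuity of translations, completes the proof. The principal obstacle I foresee is the comparison step: producing a right-hand side with linear-potential scaling $|f|(B_r)/r^{n-1}$ rather than the naive $L^p$-scaling $r^{1-n/p}\|f\|_{L^p(B_r)}$. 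In the linear case this is immediate from the Green function representation, but for a fully nonlinear equation with merely BMO coefficients and $L^p$-viscosity solutions there is no linear representation, and one must combine a sharp Escauriaza refinement of ABP with the $\theta$-BMO smallness of $F(\cdot,Y)$ to close the estimate at the $L^1$--Riesz level. The subsequent Campanato iteration and Lorentz-space bookkeeping are then standard in spirit.
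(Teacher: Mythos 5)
Your plan rests on establishing the pointwise bound
$$
|Du(x)-(Du)_{B_r(x)}| \;\le\; c\mean{B_r(x)}|Du|\,dy + c\,\mathbf{I}_1^{|f|}(x,2r)\,,
$$
i.e.\ a comparison and excess-decay scheme in which the source term enters at the $L^1$--Riesz level $|f|(B_r)/r^{n-1}$. You flag this yourself as the main obstacle and suggest it might be closed by ``a sharp Escauriaza refinement of ABP.'' This is precisely where the argument breaks: Escauriaza's refinement of the ABP maximum principle lowers the admissible integrability of the data from $L^n$ to $L^p$ with $p>n_{\rm E}\in(n/2,n)$, but it does not and cannot reach $L^1$. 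The comparison inequality available from ABP/Escauriaza is of the form
$$
\sup_{B_r}|u-v| \;\le\; c\,r^{2-n/p}\,\|f\|_{L^p(B_r)} \;=\; c\,r^2\Bigl(\mean{B_r}|f|^p\,dx\Bigr)^{1/p}\,,
$$
which carries $L^p$-scaling, not the $L^1$-scaling $|f|(B_r)/r^{n-1}$ required by the classical Riesz potential. There is no Green-function representation for $F(x,D^2u)=f$, so the linear mechanism that gives the $L^1$-Riesz bound for $-\triangle u=f$ is unavailable, and the paper explicitly points out that such estimates are not known in the fully nonlinear setting. As stated, the key step of your proposal is therefore not provable by the tools you invoke.

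The actual route in the paper sidesteps the issue by working with a \emph{modified} $L^p$-Riesz potential
$$
\tilde{\mathbf I}_p^f(x,r):=\int_0^r\Bigl(\mean{B_\varrho(x)}|f|^p\,dy\Bigr)^{1/p}d\varrho\;\ge\;\mathbf I_1^{|f|}(x,r)\,,
$$
proves the excess-decay iteration and the potential estimates at that $L^p$ level (Theorems~\ref{main1} and~\ref{main2}), and then observes --- this is the nontrivial bookkeeping you would still need --- that $f\in L(n,1)$ with $p<n$ forces $|f|^p\in L(n/p,1/p)$, so the maximal-function characterization of Lorentz spaces and the Hardy--Littlewood rearrangement inequality give $\sup_x\tilde{\mathbf I}_p^f(x,r)\to 0$ as $r\to 0$. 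In other words, the Lorentz condition on $f$ can still be fed through the \emph{weaker} $L^p$-level potential, and this is exactly what makes $L(n,1)$ the correct borderline space for fully nonlinear equations without ever proving the $L^1$-level comparison you aim for. If you replace $\mathbf I_1^{|f|}$ by $\tilde{\mathbf I}_p^f$ throughout and carry out the $L(n,1)\Rightarrow L(n/p,1/p)$ step, your Campanato iteration and Cauchy-net argument then go through in essentially the paper's form.
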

The previous result actually comes along with an explicit estimate on the modulus of continuity of $Du$ that can be seen to locally depend only on the $L^\infty$-norm of  $Du$ and on the quantity appearing in the last display; see Remark \ref{modl} below. The sharpness of the previous result is testified by  the borderline character of the space $L(n,1)$ for the case $-\triangle u=f$ with respect to the Lipschitz regularity. Indeed, in general $Du$ is unbounded if $f \in L^{q}$ with $q<n$ while we notice that $L^{q}\subset L(n,1)\subset L^n$ for every $q>n$, with all the inclusions being strict. The exact definition of general Lorentz spaces appears in Section 2 below.

The estimates in this paper are 
motivated by recent results for equations in divergence form, for which pointwise bounds via potentials are available both for $u$ and $Du$ no matter
of the nonlinearity of the operators considered (see for instance \cite{KM, KM1, KM2, TW1, TW2} and \cite{milan} for a survey of results). In particular, for solutions to $-\triangle u=f$ the estimate
\eqn{pot}
$$
 |D u(x)| \leq c \, {\bf I}_{1}^{f}(x,r)  + c\, \mean{B_r(x)}|Du|\, dy$$
holds for a.e. $x$, where ${\bf I}_{1}^{f}(x,r)$ denotes the truncated Riesz potential of $f$ $$
{\bf I}_{1}^f(x,r):= \int_0^r \mean{B_{\varrho}(x)}|f(y)|\, dy\, d\varrho\,,
$$
 see details for example in~\cite[Section 5]{milan}. For fully nonlinear equations, the  analogous estimates do not seem to be known.  Such  lack of estimates is expected as the natural minimal assumption for $f$ is that it belongs to $L^p$, $p>n_{\rm{E}}$. The path to a natural fully nonlinear counterpart for the classical potential estimate and consequent sharp borderline regularity results is instead to consider the $L^p$ version of the classical Riesz potential:
$$
{\bf I}_{1}^f(x,r):= \int_0^r \mean{B_{\varrho}(x)}|f(y)|\, dy\, d\varrho\leq \int_0^r \left(\mean{B_{\varrho}(x)}|f(y)|^p\, dy\right)^{1/p}\, d\varrho =:
\tilde{\bf  I}_{p}^{f}(x,r)\,.
$$
We are going to show that the last quantity - that we consider as a ``modified Riesz potential" - is indeed a suitable replacement of ${\bf I}_{1}^f$ in order to develop potential estimates analogous to the one in~\rif{pot} for solutions to fully nonlinear equations. Especially, it can be used to derive a sharp continuity criterion for the gradient. The results are the following two theorems.

\begin{theorem}[Gradient potential estimate]\label{main1} Let $u$ be an $L^p$-viscosity solution to the equation $F(x,D^2u)=f$ under the assumption \trif{ell}, $p>n_E$. Then, for any $q>n$, there are constants $c \geq 1$ and $\theta \in (0,1)$, both depending only on $n,\lambda, \Lambda,p,q$, such that if $F(\cdot)$ has $\theta$-BMO coefficients, i.e. there is $R_\theta>0$ such that $\omega(R_\theta) \leq \theta$, then
$$
|Du(x_0)|\leq c \, \tilde{\bf  I}_{p}^{f}(x, r) + c \left(\mean{B_r(x_0)} |Du|^q \, dx \right)^{1/q}
$$
holds whenever $B_{r}(x_0) \subset \Omega$, where $x_0$ is a Lebesgue point of $Du$  and $r \leq R_\theta$. Moreover, if $F(\cdot)$ is independent of $x$, no restriction on $r$ is necessary.
\end{theorem}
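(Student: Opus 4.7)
The plan is to establish the pointwise bound at $x_0$ by a dyadic excess-decay/comparison iteration. Fix the Lebesgue point $x_0$, a ball $B_r(x_0)\subset\Omega$ with $r\leq R_{\theta}$, and set $B_j:=B_{r_j}(x_0)$ with $r_j:=\sigma^j r$, where $\sigma\in(0,1/4)$ will be chosen later. Put $A_j:=(Du)_{B_j}$ and define the $L^q$-gradient excess
\[
E_q(j):=\Bigl(\mean{B_j}|Du-A_j|^q\,dx\Bigr)^{1/q}.
\]
Since $x_0$ is a Lebesgue point of $Du$, the telescoping identity $Du(x_0)=A_0+\sum_{j\geq 0}(A_{j+1}-A_j)$, together with the elementary bounds $|A_0|\leq(\mean{B_r}|Du|^q\,dx)^{1/q}$ and $|A_{j+1}-A_j|\leq c\sigma^{-n/q}E_q(j)$, reduces the theorem to controlling the sum $\sum_j E_q(j)$ and the $L^q$-averages of $Du$ on the dyadic balls.

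At scale $r_j$, I would compare $u$ with the solution $v_j$ of the constant-coefficient Dirichlet problem
\[
(F)_{B_j}(D^2 v_j)=0 \quad \text{in } B_j,\qquad v_j=u \quad \text{on } \partial B_j,
\]
where $(F)_{B_j}$ is the averaged operator of the preamble. Since $(F)_{B_j}$ is independent of $x$ and inherits the ellipticity \eqref{ell}, Caffarelli's $C^{1,\alpha_0}$ theory for constant-coefficient fully nonlinear equations provides $\alpha_0=\alpha_0(n,\lambda,\Lambda)\in(0,1)$ with
\[
r_j^{\alpha_0}[Dv_j]_{C^{\alpha_0}(B_{r_j/2})}+\|Dv_j\|_{L^{\infty}(B_{r_j/2})} \leq c\Bigl(\mean{B_j}|Dv_j|^q\,dx\Bigr)^{1/q},
\]
and hence, for any $\sigma\in(0,1/2)$,
\[
\Bigl(\mean{B_{\sigma r_j}}|Dv_j-(Dv_j)_{B_{\sigma r_j}}|^q\,dx\Bigr)^{1/q} \leq c\sigma^{\alpha_0}\Bigl(\mean{B_j}|Dv_j|^q\,dx\Bigr)^{1/q}.
\]
For the difference $w:=u-v_j$, the Pucci inequalities together with the $L^p$-viscosity ABP \cite{CCKS,S} and the higher integrability of $D^2 v_j$ from Escauriaza--Caffarelli theory yield, under the BMO smallness $\omega(r_j)\leq \theta$, a gradient-comparison estimate of the form
\[
\Bigl(\mean{B_{r_j/2}}|Du-Dv_j|^q\,dx\Bigr)^{1/q}\leq c\,r_j\Bigl(\mean{B_j}|f|^p\,dx\Bigr)^{1/p}+c\,\theta^{\delta}\Bigl(\mean{B_j}|Du|^q\,dx\Bigr)^{1/q}
\]
for some $\delta=\delta(n,\lambda,\Lambda,p,q)>0$.

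Combining the two displays via the triangle inequality and the minimality of $A_{j+1}$ produces the excess-decay inequality
\[
E_q(j+1)\leq c\sigma^{\alpha_0}\Bigl(\mean{B_j}|Du|^q\Bigr)^{1/q}+c\sigma^{-n/q}\Bigl[r_j\Bigl(\mean{B_j}|f|^p\Bigr)^{1/p}+\theta^{\delta}\Bigl(\mean{B_j}|Du|^q\Bigr)^{1/q}\Bigr].
\]
I would fix $\sigma$ so that $c\sigma^{\alpha_0}\leq 1/8$ and then $\theta$ so small that $c\sigma^{-n/q}\theta^{\delta}\leq 1/8$. A standard iteration argument (as in \cite{KM,KM1,KM2,milan}) then shows that the averages $(\mean{B_j}|Du|^q)^{1/q}$ stay uniformly bounded by $(\mean{B_r}|Du|^q)^{1/q}+\tilde{\bf I}_p^f(x_0,r)$, and that $\sum_j E_q(j)\leq c\,\tilde{\bf I}_p^f(x_0,r)+c\,(\mean{B_r}|Du|^q)^{1/q}$, using the dyadic-to-continuous bound $\sum_j r_j(\mean{B_j}|f|^p)^{1/p}\leq c\,\tilde{\bf I}_p^f(x_0,r)$. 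Plugging this into the telescoping sum for $Du(x_0)$ yields the claimed pointwise estimate; when $F$ is independent of $x$, the BMO error disappears and no upper bound on $r$ is needed.

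The main technical obstacle is the gradient-comparison step of the second paragraph. Unlike the divergence-form setting, no Caccioppoli inequality is available, so transferring the $L^{\infty}/L^p$-viscosity ABP control on $w=u-v_j$ to an $L^q$ control on $D(u-v_j)$ must be done through Caffarelli's interior $C^{1,\alpha}$ estimates and Escauriaza's $W^{2,p}$ reverse-H\"older theory applied to the frozen operator $(F)_{B_j}$. The BMO mismatch $F(x,\cdot)-(F)_{B_j}(\cdot)$ has to be quantified by a suitable small power $\theta^{\delta}$ in order to be absorbable in the iteration, and it is precisely here that the hypothesis $p>n_E$ (rather than merely $p\geq n$) and the freedom to choose any $q>n$ play a delicate role.
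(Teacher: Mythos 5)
Your high-level skeleton matches the paper's: dyadic balls $B_j=B_{\sigma^j r}(x_0)$, the $L^q$-excess $E_q$, a one-step excess-decay inequality that is iterated, a telescoping identity at the Lebesgue point, and the dyadic-to-continuous reassembly $\sum_j r_j(\mean{B_j}|f|^p)^{1/p}\lesssim\tilde{\bf I}_p^f(x_0,r)$. The difficulty is entirely in the one-step decay, and there your proposal has a genuine gap.

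You base the decay step on a gradient comparison
\[
\Bigl(\mean{B_{r_j/2}}|Du-Dv_j|^q\,dx\Bigr)^{1/q}\leq c\,r_j\Bigl(\mean{B_j}|f|^p\,dx\Bigr)^{1/p}+c\,\theta^{\delta}\Bigl(\mean{B_j}|Du|^q\,dx\Bigr)^{1/q},
\]
where $v_j$ solves the frozen Dirichlet problem with $v_j=u$ on $\partial B_j$. You assert this follows from Pucci inequalities, the $L^p$-viscosity ABP and Escauriaza--Caffarelli $W^{2,p}$ theory, and you yourself flag it as ``the main technical obstacle'' --- but you never actually derive it, and it does not follow from what you cite. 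The ABP inequality controls $\|u-v_j\|_{L^\infty}$, not $\|D(u-v_j)\|_{L^q}$, and there is no Caccioppoli inequality in the fully nonlinear setting to convert the one into the other. If you try to run \'Swiech's $W^{1,q}$ interior estimate on $w=u-v_j$ directly, its ``right-hand side'' contains $F(x,D^2v_j)-(F)_{B_j}(D^2v_j)$, which is bounded by $\omega(r_j)|D^2v_j|$ --- an object controlled by the $W^{2,p}$ norm of $v_j$, not by $(\mean{B_j}|Du|^q)^{1/q}$, so the coefficient $\theta^{\delta}$ does not multiply the quantity you want. This is precisely the step the paper is designed to sidestep.

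The paper's route (Lemma~\ref{cah} and Lemma~\ref{main lemma}) is structurally different at the crucial point: it never compares $Du$ with $Dv_j$. Instead it (i) produces, by a compactness argument with smallness of both $\omega$ and $\|f\|_{L^p}$, an interior frozen solution $h$ with only $L^\infty$-closeness $\|u-h\|_\infty\leq\eps$; (ii) uses Caffarelli--Cabr\'e interior $C^{1,\alpha}$ regularity of $h$ to replace $h$ by its first-order Taylor polynomial $\ell$ with $\|h-\ell\|_{L^\infty(B_{2\sigma})}\lesssim\sigma^{1+\alpha}$; and (iii) observes that $u-\ell$ solves the \emph{same} equation $F(x,D^2(u-\ell))=f$ because $D^2\ell=0$, so \'Swiech's interior $W^{1,q}$ estimate (Theorem~\ref{thm:Swiech}) applied to $u-\ell$ turns the $L^\infty$ smallness of $u-\ell$ into the $L^q$ smallness of $Du-D\ell$, whence the excess decay. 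The affine subtraction is the key device: it costs nothing in the equation but makes the sup-to-gradient transfer legitimate. A secondary structural difference is that the paper's decay lemma is stated in normalized form ($E_q(B_1)\le 1$, small data) and then iterated by rescaling with the weight $A_i:=E_q(B_i)+(r_i/\theta)(\mean{B_i}|f|^p)^{1/p}+\tilde\eps$; this removes the need, present in your outline, to separately propagate bounds on the averages $(\mean{B_j}|Du|^q)^{1/q}$ along the iteration. To repair your argument, replace the direct comparison with $v_j$ by the compactness-plus-affine-approximation scheme (or supply an independent proof of the gradient-comparison estimate you posited, which would be a new result and, in this generality, appears out of reach).
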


\begin{theorem}[Gradient continuity via potentials]\label{main2} Assume that  $u$ is  an $L^p$-viscosity solution to the equation $F(x,D^2u)=f$ under the assumption \trif{ell}, $p>n_E$.
There is $\theta \in (0,1)$ depending only on $n,p,\lambda,\Lambda$ such that if $F(\cdot)$ has $\theta$-BMO coefficients and if $\tilde{\bf  I}_{p}^{f}(x,r) \to 0$ as $r \to 0$ uniformly in $x$, then $Du$ is continuous. Moreover, whenever $\Omega' \Subset \Omega'' \Subset  \Omega$ are open subsets, and $\delta \in (0,1]$, the following holds:
\begin{eqnarray}
\nonumber
&& |Du(x_1) - Du(x_2)|
\\ && \qquad
\leq c \|Du\|_{L^\infty(\Omega'')}  |x_1-x_2|^{\alpha(1-\delta)} + c\sup_{x\in \{x_1, x_2\}} \tilde {\bf I}_p^f(x,4|x_1-x_2|^\delta)
\label{contd}
\end{eqnarray}
for all $x_1,x_2 \in \Omega'$, where $c \equiv c(n,p,\lambda,\Lambda,\Omega',\Omega'', \omega(\cdot))$ and $\alpha = \alpha(n,p,\lambda,\Lambda)$.
\end{theorem}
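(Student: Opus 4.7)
\smallskip
\noindent\textbf{Proof proposal.}

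The continuity statement in the first part of the theorem is a direct consequence of the quantitative inequality~\eqref{contd}: for any fixed $\delta \in (0,1)$ the first summand on the right hand side vanishes as $|x_1-x_2|\to 0$ as a power of $|x_1-x_2|$, while the second summand vanishes by the uniform hypothesis $\tilde{\bf I}_p^f(x,r)\to 0$ as $r\to 0$. The real content is therefore \eqref{contd}, and the plan is to combine the pointwise gradient bound of Theorem~\ref{main1} with a frozen-coefficient comparison on an intermediate scale.

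Fix $x_1,x_2\in\Omega'$, set $d:=|x_1-x_2|$ small enough that, with $R:=d^\delta \geq d$ and $x_0$ the midpoint of $x_1,x_2$, one has $B_{8R}(x_0)\Subset\Omega''$ and $R\leq R_\theta$. On $B_{4R}(x_0)$ introduce the viscosity solution $v$ of the frozen homogeneous Dirichlet problem
\[
(F)_{B_{4R}(x_0)}(D^2 v) = 0 \quad \text{in } B_{4R}(x_0)\,, \qquad v=u \ \text{ on } \partial B_{4R}(x_0)\,.
\]
Since the frozen operator has no $x$-dependence and still satisfies \eqref{ell}, Caffarelli's interior $C^{1,\alpha}$ theory yields an exponent $\alpha=\alpha(n,\lambda,\Lambda)$ with
\[
|Dv(x_1)-Dv(x_2)|\leq c\bigl(d/R\bigr)^{\alpha}\,\|Dv\|_{L^\infty(B_{2R}(x_0))}=c\, d^{\alpha(1-\delta)}\,\|Dv\|_{L^\infty(B_{2R}(x_0))}\,;
\]
in turn $\|Dv\|_{L^\infty(B_{2R}(x_0))}$ is bounded by $\|Du\|_{L^\infty(\Omega'')}$ up to a harmless potential correction, via Theorem~\ref{main1} applied to $v$. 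This produces exactly the first summand of~\eqref{contd}.

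It remains to control the comparison defect $|D(u-v)(x_i)|$ for $i=1,2$. Setting $w:=u-v$ and subtracting the two equations, the standard Pucci bounds on $F$ give the two-sided viscosity inequalities
\[
\mathcal{P}^-(D^2 w)\leq |f|+\bigl|F(x,D^2 v)-(F)_{B_{4R}(x_0)}(D^2 v)\bigr|\,, \quad \mathcal{P}^+(D^2 w)\geq -|f|-\bigl|F(x,D^2 v)-(F)_{B_{4R}(x_0)}(D^2 v)\bigr|\,,
\]
where the second right-hand term encodes the BMO oscillation of $F$ on $B_{4R}(x_0)$ and is therefore small of order $\theta$ after averaging. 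The plan is then to rerun the argument behind Theorem~\ref{main1} for functions satisfying only these Pucci inequalities with right-hand side $|f|$ plus a BMO-error: the datum $|f|$ produces a contribution of the form $c\,\tilde{\bf I}_p^f(x_i,4R)$, while the BMO term is absorbed using the $\theta$-smallness of the coefficients together with reverse H\"older bounds for $D^2 v$, provided $\theta$ is fixed small enough depending only on $n,p,\lambda,\Lambda$. The single hardest point is precisely this absorption: it requires extending the pointwise potential machinery from equations to Pucci inequalities with a genuine BMO defect and checking that the defect can be reabsorbed into the left-hand side. Once this is in place, the triangle inequality $|Du(x_1)-Du(x_2)|\leq |Dw(x_1)|+|Dv(x_1)-Dv(x_2)|+|Dw(x_2)|$ delivers \eqref{contd}.
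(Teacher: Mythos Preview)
Your strategy diverges from the paper's and has a genuine gap at the very point you flag as ``the single hardest''.

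The paper never introduces a comparison solution $v$ of the frozen homogeneous problem. Instead it works entirely with the excess functional $E_q(B_\varrho(x_0))$ and iterates the decay Lemma~\ref{main lemma} (the same ingredient behind Theorem~\ref{main1}). First it shows
\[
E_q(B_\varrho(x_0)) \leq c\Bigl[(\varrho/r)^\alpha \|Du\|_{L^\infty(\Omega'')} + \tilde{\bf I}_p^f(x_0,2r)\Bigr]\,,
\]
which already gives the VMO-regularity of $Du$. Then a telescoping/Cauchy-net argument on the averages $(Du)_{B_\varrho(x_0)}$ converts this into the pointwise bound
\[
|Du(x_0)-(Du)_{B_\tau(x_0)}| \leq c\Bigl[(\tau/r)^\alpha \|Du\|_{L^\infty(\Omega'')} + \tilde{\bf I}_p^f(x_0,2r)\Bigr]\,,
\]
and \eqref{contd} follows from the usual three-term split $|Du(x_1)-(Du)_{B_{\tau/2}(x_1)}|+|(Du)_{B_{\tau/2}(x_1)}-(Du)_{B_{\tau/2}(x_2)}|+|(Du)_{B_{\tau/2}(x_2)}-Du(x_2)|$ with $\tau=2|x_1-x_2|$ and the choice $r\approx |x_1-x_2|^\delta$. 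No equation subtraction is ever performed.

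Your scheme, by contrast, hinges on subtracting $u$ and $v$ to obtain Pucci inequalities for $w=u-v$ with right-hand side $|f|+|F(x,D^2v)-(F)_{B_{4R}}(D^2v)|$. This step presupposes that $D^2v$ is a well-defined object (pointwise or at least in $L^p$), so that $F(x,D^2v)$ makes sense and the ellipticity bound \eqref{ell} can be applied pointwise. But the paper imposes no convexity or concavity on $F$; under \eqref{ell} alone, solutions of $(F)_{B_{4R}}(D^2v)=0$ are only $C^{1,\alpha}$ by Krylov--Safonov/Caffarelli theory, and Evans--Krylov is unavailable. So $D^2v$ need not exist, the Pucci inequalities you write for $w$ are not justified in the viscosity sense, and the ``BMO defect'' term $|F(x,D^2v)-(F)_{B_{4R}}(D^2v)|$ has no meaning. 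This is precisely why the paper's compactness-based decay lemma (Lemma~\ref{cah} feeding into Lemma~\ref{main lemma}) is set up to avoid any direct subtraction of solutions.

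Even granting a setting where $v\in W^{2,p}$, the absorption you sketch is not routine: the defect is of size $\omega(4R)\,|D^2v|$, and you would need uniform $L^p$ control of $D^2v$ on all intermediate balls together with a mechanism to feed this back into an excess iteration for $w$; none of this is supplied. A minor additional point: Theorem~\ref{main1} applied to $v$ does not by itself bound $\|Dv\|_{L^\infty}$ in terms of $\|Du\|_{L^\infty}$; you would need the maximum principle plus an interior gradient estimate for the homogeneous problem instead.
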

An alternative form of the above continuity estimate, independent of the open subset considered is available in \rif{combiest} below.

Further results follow as a by-product of our estimate. Regularity results in VMO and BMO spaces, reproducing in the fully nonlinear case those known for the Poisson equation, can now be proved. Indeed, the following holds:

\begin{theorem}\label{mainvb} Let $u$ be an $L^p$-viscosity solution to the equation $F(x,D^2u)=f$ under the assumption~\trif{ell}, $p>n_E$. There exists a constant $\theta \in (0,1)$, depending only on $n,\lambda, \Lambda,p$, such that if $F(\cdot)$ has $\theta$-BMO coefficients, then
\eqn{cafforthodox}
$$
\sup_{B_{r}(x_0)} r^{p-n} \int_{B_{r}(x_0)} |f|^p\, dx < \infty
$$
implies that $ Du \in \textnormal{BMO}$ holds locally in $\Omega$. In particular $f \in L(n, \infty)$ implies the local BMO-regularity of $Du$. Moreover, if
\eqn{vaiazero}
$$
\lim_{r\to0} r^{p-n} \int_{B_{r}(x_0)} |f|^p\, dx =0
$$
holds locally uniformly with respect to $x_0$,  then $ Du \in \textnormal{VMO}$ holds locally in $\Omega$.
\end{theorem}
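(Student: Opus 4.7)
The plan is to reuse the excess-decay machinery underlying Theorem~\ref{main1}. Its proof produces, at every ball $B_r(x_0) \subset \Omega$ with $r \leq R_\theta$ and every $\sigma \in (0,1/4)$, a Campanato-type decay of the form
\[
\Phi(x_0, \sigma r) \leq c\sigma^\alpha \Phi(x_0, r) + c(\sigma)\, r \left(\mean{B_r(x_0)} |f|^p \, dy\right)^{1/p},
\]
where $\Phi(x_0, r) := \bigl(\mean{B_r(x_0)} |Du - (Du)_{B_r(x_0)}|^q\, dy\bigr)^{1/q}$ and $\alpha, c$ depend only on $n, p, q, \lambda, \Lambda$. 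Dyadic iteration of this inequality plus summation of the forcing term is precisely what produced, in Theorem~\ref{main1}, the pointwise bound via $\tilde{\bf I}_p^f$; here we carry out the same iteration but keep the forcing term intact rather than summing it into a Riesz-type integral.

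Under the Morrey hypothesis~\rif{cafforthodox}, on any compact set $K \Subset \Omega$, setting $M := \sup\{\varrho^{p-n}\int_{B_\varrho(x)}|f|^p\, dy : B_\varrho(x) \subset K'\} < \infty$ with $K \Subset K' \Subset \Omega$, one has $r(\mean{B_r(x)}|f|^p)^{1/p} \leq c\, M^{1/p}$ uniformly in the admissible balls. Choose $\sigma$ so that $c\sigma^\alpha \leq 1/2$; iterating the decay inequality at the scales $\sigma^k r_0$ yields
\[
\sup_{0<r\leq r_0} \Phi(x_0, r) \leq c \left(\mean{B_{r_0}(x_0)} |Du|^q\, dy\right)^{1/q} + c\, M^{1/p}
\]
uniformly for $x_0 \in K$, which is exactly the Campanato characterization of local BMO for $Du$. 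The special case $f \in L(n,\infty)$ reduces to~\rif{cafforthodox} via the layer-cake estimate $\|f\|_{L^p(B_r)}^p \leq c\, r^{n-p} \|f\|_{L(n,\infty)}^p$, valid for any $p \in (n_{\rm E}, n)$; if the $p$ given by hypothesis is at least $n$ one simply applies the result with a smaller exponent $p' \in (n_{\rm E}, n)$, using $L^p_{\loc} \subset L^{p'}_{\loc}$.

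For VMO, hypothesis~\rif{vaiazero} asserts precisely that $r(\mean{B_r(x_0)}|f|^p)^{1/p} \to 0$ as $r\to 0$ locally uniformly. Splitting $(0, r_0]$ into a small initial range, where the forcing term is uniformly small, and a controlled geometric tail, the same iteration then yields $\Phi(x_0, r) \to 0$ as $r\to 0$ locally uniformly in $x_0$, which is the Campanato characterization of local VMO. The main technical point is to verify that the proof of Theorem~\ref{main1} indeed supplies the displayed Campanato decay with constants depending only on the listed quantities; this requires inspecting the comparison between $u$ and a solution $v$ of the frozen homogeneous equation $F_0(D^2 v) = 0$ (which admits interior $C^{1,\alpha}$ estimates, hence $L^q$ Campanato decay for $Dv$) together with the Pucci-type control of $u-v$ via $f$ and the $\theta$-BMO oscillation of $F$. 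Once this is in place, the BMO and VMO conclusions are classical Giaquinta-Campanato iterations.
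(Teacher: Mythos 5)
Your proposal is correct and follows essentially the same route as the paper: iterate the excess-decay estimate coming from Lemma~\ref{main lemma} (i.e.\ \eqref{servedopo}), bound the $f$-forcing term uniformly by the Morrey-type quantity in~\eqref{cafforthodox} instead of summing it into the modified Riesz potential as in Theorem~\ref{main1}, and conclude via the Campanato characterization of BMO and VMO. The treatment of the $L(n,\infty)$ case via the H\"older--Marcinkiewicz inequality is also the same as in the paper.
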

We notice that a conditions as \rif{cafforthodox}-\rif{vaiazero} appear to be dual borderline ones of those considered by Caffarelli \cite{Cann}, who proved $C^{0,\alpha}$-estimates for $Du$ assuming that
$$
\sup_{B_{r}(x_0)} r^{n(1-\alpha)-n} \int_{B_{r}(x_0)} |f|^n\, dx < \infty\,.
$$
Moreover, borderline BMO results for second derivatives have been established in~\cite{CaffDuke}.
See also \cite[Theorem 1.12]{pisa} for an analogous result to Theorem \ref{mainvb} valid  for degenerate quasilinear equations.

Furthermore, the potential estimate in Theorem \ref{main1} plays the role of the classical linear potential estimates via Riesz potentials for equations as $-\triangle u=f$ and indeed it allows to prove refined versions of the classical $W^{1,q}$-estimates for instance in interpolation spaces, something that seems otherwise unreachable with the available techniques. For instance, the next result deals with sharp estimates in interpolation spaces like Lorentz spaces $L(q, \gamma)$ and Morrey spaces $L^{q,s}$ whenever $q>n_{\rm{E}}$, while no interpolation theory seems to be available for fully nonlinear elliptic equations. 
Basic definitions and properties of these
spaces will be given in Section \ref{spazi} below.
\begin{theorem}\label{mainl} Let  $u$ be an $L^p$-viscosity solution to the equation $F(x,D^2u)=f$ under the assumption~\trif{ell}, $p>n_E$. There exists a constant $\theta \in (0,1)$, depending only on $n,\lambda, \Lambda,p$, such that if $F(\cdot)$ has $\theta$-BMO coefficients, 
then
\eqn{lol}
$$f \in L(q, \gamma) \Longrightarrow  Du \in L(nq/(n-q),\gamma)\qquad \mbox{whenever} \ \gamma>0, \ q \in (p,n) $$ and
\eqn{mol}
$$f \in L^{q,s} \Longrightarrow  Du \in L^{\theta q/(s-q),\theta} \qquad \mbox{for} \  p < q< s \leq n$$
 hold locally in $\Omega$.
\end{theorem}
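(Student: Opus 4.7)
The plan is to deduce Theorem \ref{mainl} from the pointwise potential estimate of Theorem \ref{main1}, reducing the nonlinear map $f \mapsto Du$ to classical Lorentz/Morrey mapping properties of two well-known harmonic-analytic objects: the standard Riesz potential $\mathbf{I}_1$ and the $L^p$-Hardy--Littlewood maximal operator $M_p(g) := (M(|g|^p))^{1/p}$. The cornerstone of the argument is the pointwise comparison
$$
\tilde{\bf I}_p^f(x,r) \leq c(n,p)\, {\bf I}_1^{M_p(f)}(x, r),
$$
which is elementary: for any ball $B = B_\varrho(x)$ and any $y \in B$, the inclusion $B \subset B_{2\varrho}(y)$ yields $(\mean{B}|f|^p\, dz)^{1/p} \leq c(n)\, M_p(f)(y)$, so averaging in $y \in B$ gives $(\mean{B}|f|^p)^{1/p} \leq c\, \mean{B} M_p(f)\, dy$, and integration in $\varrho$ produces the claim. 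Combining this bound with Theorem \ref{main1} applied on balls $B_r(x_0)$ with $r \leq R_\theta$ and some fixed $q_0 > n$, one obtains at a.e.\ $x_0 \in \Omega' \Subset \Omega$
$$
|Du(x_0)| \leq c\, {\bf I}_1^{M_p(f)}(x_0, r) + c\left(\mean{B_r(x_0)} |Du|^{q_0}\, dx\right)^{1/q_0}.
$$

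For the Lorentz assertion \rif{lol}, one invokes two classical mapping properties. First, the Riesz potential $\mathbf{I}_1$ sends $L(q, \gamma)$ continuously into $L(nq/(n-q), \gamma)$ for every $q \in (1,n)$ and $\gamma > 0$ (O'Neil's strengthening of the Hardy--Littlewood--Sobolev inequality). Second, $M_p$ is bounded on $L(q, \gamma)$ whenever $q > p$, which is immediate from the boundedness of $M$ on $L(q/p, \gamma/p)$ together with the identity $\|g\|_{L(q, \gamma)}^{p} = \||g|^p\|_{L(q/p, \gamma/p)}$. Composing these and inserting into the pointwise bound above yields $Du \in L(nq/(n-q), \gamma)_{\textnormal{loc}}$ whenever $p < q < n$, provided the background term can be absorbed. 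For the latter, I fix nested subdomains $\Omega' \Subset \Omega'' \Subset \Omega$ and exploit the classical Caffarelli--Escauriaza $W^{2,q_0}$-regularity theory for fully nonlinear equations with $\theta$-BMO coefficients (valid for $\theta$ small), which gives $Du \in L^{q_0}(\Omega'')$ for some $q_0 > n$, embedded into every relevant Lorentz space on $\Omega''$; a standard covering argument then delivers the conclusion on $\Omega'$. The Morrey assertion \rif{mol} is proved along the same lines, replacing the Lorentz mapping properties of $M_p$ and $\mathbf{I}_1$ with their standard Morrey analogues (and tracking the exponents carefully through the composition).

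The main obstacle is disentangling the ``background'' term $(\mean{B_r(x_0)} |Du|^{q_0})^{1/q_0}$ from the ``data'' term ${\bf I}_1^{M_p(f)}(x_0, r)$ so that the target fine regularity of $Du$ is not lost: this is precisely why one must first upgrade the hypothesis to $Du \in L^{q_0}_{\textnormal{loc}}$ for $q_0 > n$ via Caffarelli--Escauriaza theory, and then work on nested subdomains. A secondary technicality is that the modified potential in Theorem \ref{main1} is truncated at scale $r \leq R_\theta$, so to apply off-the-shelf Lorentz/Morrey bounds one replaces $f$ by $f \chi_{\Omega''}$ extended by zero; this is harmless because the conclusions in \rif{lol} and \rif{mol} are local in nature and both Lorentz and Morrey norms decrease under such a restriction.
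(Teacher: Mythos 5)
Your proposal is correct, and the overall skeleton (pointwise potential estimate from Theorem \ref{main1}, then classical harmonic-analytic mapping properties, then a localization/covering argument) matches the paper's. But the key reduction step is genuinely different, and arguably more elementary. The paper recognizes $\tilde{\bf I}_p^f(x,r)$ as (a constant times) the Wolff potential ${\bf W}^{|f|^p}_{p/(p+1),\,p+1}(x,r)$, bounds it via the Havin--Maz'ya inequality by the composed potential ${\bf V}_{p/(p+1),\,p+1}(|f|^p)(x)=I_{\beta}\bigl[(I_{\beta}(|f|^p))^{1/p}\bigr](x)$ with $\beta=p/(p+1)$, and then applies the Lorentz (resp.\ Adams--Morrey) mapping theorem for $I_\beta$ \emph{twice}, tracking the intermediate exponents through two Riesz-potential compositions. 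You instead show directly, by the elementary inclusion $B_\varrho(x)\subset B_{2\varrho}(y)$ and averaging, that $\tilde{\bf I}_p^f(x,r)\lesssim {\bf I}_1^{M_p(f)}(x,r)$ with $M_p(g)=(M(|g|^p))^{1/p}$, so that the only Riesz potential to control is $I_1$, and the maximal operator absorbs the inner layer of the paper's estimate. The exponent computation is then a one-liner: $f\in L(q,\gamma)$, $q>p$, gives $|f|^p\in L(q/p,\gamma/p)$, the maximal operator preserves this (since $q/p>1$), so $M_p(f)\in L(q,\gamma)$ and $I_1(M_p(f))\in L(nq/(n-q),\gamma)$; similarly in Morrey spaces using the Chiarenza--Frasca bound for $M$ and Adams' theorem for $I_1$. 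This is a legitimate, self-contained alternative that dispenses with Wolff/Havin--Maz'ya machinery; what the paper's route buys is consistency with the nonlinear potential theory language used throughout the rest of the paper and in the cited literature, rather than any technical advantage. One small deviation: to control the background average $(\mean{B_r(x_0)}|Du|^{q_0})^{1/q_0}$ you invoke $W^{2,q_0}$ theory, whereas the paper uses \'Swiech's $W^{1,q}$ estimates (Theorem \ref{thm:Swiech}), which is the more direct and what is already on record in the paper; either yields the needed $Du\in L^{q_0}_{\rm loc}$ for some $q_0>n$, and in both cases the background term is a bounded constant on $\Omega'$, hence harmless for any Lorentz/Morrey conclusion on a bounded set.
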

We note that both \rif{lol} and \rif{mol} are sharp already in the case $-\triangle u =f$, and~\rif{mol} is indeed  the fully nonlinear counterpart of some classical results of Adams \cite{adams}. On the other hand, Theorem \ref{mainl} extends  the estimates of \'Swiech \cite{S} to the case of Lorentz and Morrey spaces.

The rest of the paper is now structured as follows; we will first prove Theorems \ref{main1} and \ref{main2}, while  the proof of Theorem \ref{mainvb} will be obtained by modifying the arguments introduced for Theorem \ref{main2}. Finally, Theorems \ref{mainc} and \ref{mainl} will be obtained as a corollary of Theorem \ref{main2} and \ref{main1}, respectively. To conclude this section we briefly describe the notation adopted here; in the following $c$ will denote a general constant larger than one, and relevant dependence on parameters will be emphasized in parentheses, for instance, $c \equiv c(n,p,\lambda, \Lambda)$. The ball of radius $r$ and center $x$ shall be denoted by $B_r(x)$ and if there is no confusion about the center, simply by $B_r$. In the following, given a set $A \subset \er^n$ with positive measure and a map $g \in L^1(A,\er^n)$, we shall denote by
$(g)_A:=\mean{A} g(y)\, dy$
its integral average over the positive measure set $A$. 
As for the notation concerning the regularity theory for fully nonlinear elliptic equations, we adopt the notation fixed in \cite{CC}.

\section{Preliminaries}
In this section we recall a few standard results about viscosity solutions and then prove a basic decay estimate for solutions with coefficients with small oscillations and small data. Finally, in Section \ref{spazi} we recall some basic definitions concerning a few relevant function spaces.

\subsection{Technical preliminary lemmas}
We start recalling a basic result about $W^{1,q}$ estimates for $L^p$-viscosity solutions; its  statement is a  combination of the results in \cite{CC, Cann} and \cite[Theorem 2.1]{S}.
\begin{theorem}\label{thm:Swiech} 
Let $u$ be an $L^p$-viscosity solution to $F(x,D^2u)  = f$ in $B_1$, where $f \in L^p(B_1)$. There is a constant $\theta \in (0,1)$ depending only on $n,p,\lambda,\Lambda$ such that if $\omega(1) \leq \theta$, then $u\in W_{\rm{loc}}^{1,q}(B_1)$ and
\begin{equation}\label{gradient bound}
    \left(\mean{B_{1/2}} |Du|^{q} \, dx \right)^{1/q} \leq c \sup_{B_1} |u| + c\left(\mean{B_{1}} |f|^{p} \, dx \right)^{1/p}
\end{equation}
holds for all $1\leq q < n p/(n-p)$ if $p< n$, and for all $q \geq 1$ otherwise, where the constant $c$ depends only upon $n,p,q,\lambda,\Lambda$.
\end{theorem}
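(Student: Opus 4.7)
The plan is to follow the perturbative strategy of Caffarelli \cite{Cann}, adapted to the $L^p$-viscosity setting by \'Swiech \cite{S}. The core idea is to compare $u$ on small scales with the solution of a frozen, $x$-independent equation, exploit the interior $C^{1,\alpha}$-regularity available for such solutions, and transfer the approximation error into a level-set estimate that can be iterated via a Calder\'on-Zygmund decomposition. A final Sobolev embedding then promotes the resulting $W^{2,p}$ bound to the claimed $W^{1,q}$ estimate.

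Concretely, fix a ball $B_\rho(y)\subset B_1$ and let $v$ solve the Dirichlet problem $(F)_{B_\rho(y)}(D^2v)=0$ in $B_\rho(y)$ with $v=u$ on $\partial B_\rho(y)$. Since the averaged operator $(F)_{B_\rho(y)}$ still satisfies \rif{ell} and is independent of $x$, Caffarelli's interior $C^{1,\alpha}$-theory provides quantitative decay of the Hessian of $v$ on smaller concentric balls. Meanwhile, by ellipticity the difference $w:=u-v$ is, in the $L^p$-viscosity sense, trapped between Pucci operators whose right-hand sides are controlled by $|f|$ plus the \emph{oscillation error} $|F(x,D^2u)-(F)_{B_\rho(y)}(D^2u)|$. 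The defining property of $\omega$ forces this error to be small in an averaged sense, by a factor proportional to $\omega(\rho)\,|D^2u|$; combined with the $L^p$-ABP inequality (which is exactly where the restriction $p>n_E$, via the Escauriaza exponent of \cite{E}, enters) one obtains $\sup_{B_\rho(y)}|w|$ bounded by $\rho^2$ times a quantity controlled by $\|f\|_{L^p}$, $\omega(\rho)$, and a local $L^p$-norm of $D^2u$.

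Choosing $\theta$ sufficiently small and combining this approximation bound with the Hessian decay of $v$ yields a geometric decay inequality for the level sets of the Hardy-Littlewood maximal function of $|D^2u|^p$ at dyadic scales. A Calder\'on-Zygmund covering argument in the style of \cite[Chapter 7]{CC} then shows that these level sets contract, producing
$$
\|D^2u\|_{L^p(B_{3/4})} \leq c\sup_{B_1}|u| + c\|f\|_{L^p(B_1)}.
$$
Finally, Sobolev embedding $W^{2,p}\hookrightarrow W^{1,np/(n-p)}$ when $p<n$ and Morrey's inequality when $p\geq n$ deliver \rif{gradient bound} for every admissible $q$.

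The main obstacle, and the reason the result is presented as an amalgam of \cite{C, CC, Cann, E, S} rather than proved in one stroke, is the delicate interplay between the $L^p$-viscosity framework and the perturbation procedure: one must verify that $w=u-v$ is an admissible $L^p$-viscosity sub/supersolution of the perturbed equation (invoking the stability theory of \cite{CCKS}) and that the ABP and Calder\'on-Zygmund tools remain quantitatively valid under the weak coefficient assumption $\omega(1)\leq\theta$, which ultimately rests on the reverse H\"older inequalities of Escauriaza \cite{E} for fundamental solutions.
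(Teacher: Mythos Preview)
The paper does not actually prove this theorem; it is stated in Section~2.1 as a known result, with the remark that ``its statement is a combination of the results in \cite{CC, Cann} and \cite[Theorem 2.1]{S}''. So your sketch must be compared against that literature rather than against anything written in the paper.

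Your outline contains a genuine gap. You assert that the Calder\'on--Zygmund iteration produces
\[
\|D^2u\|_{L^p(B_{3/4})} \leq c\sup_{B_1}|u| + c\|f\|_{L^p(B_1)},
\]
and then invoke the Sobolev embedding $W^{2,p}\hookrightarrow W^{1,np/(n-p)}$. But Caffarelli's $W^{2,p}$ theory in \cite{Cann} and \cite[Chapter 7]{CC} requires, in addition to \rif{ell}, that the frozen-coefficient equation $(F)_{B_\rho(y)}(D^2v)=0$ enjoy interior $C^{1,1}$ estimates with constants independent of $y$. This hypothesis is guaranteed, for instance, by concavity or convexity of $F$ in the Hessian variable (Evans--Krylov), but it is \emph{not} assumed in the statement and is not a consequence of \rif{ell}. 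Under \rif{ell} alone, only $C^{1,\alpha}$ regularity of $v$ is available, and that is insufficient to control the level sets of the maximal function of $|D^2u|^p$; the iteration you describe simply does not close at the Hessian level.

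The argument of \'Swiech in \cite{S} circumvents this by running the Caffarelli machinery directly at the gradient level: one compares $u$ to the $C^{1,\alpha}$ solution $v$, subtracts at each scale the affine part of $v$, and shows geometric decay of the level sets of a maximal-type functional built from $Du$ (equivalently, from first-order difference quotients of $u$). This yields $Du\in L^q_{\rm loc}$ for every $q<np/(n-p)$ without ever appealing to second derivatives. Your identification of the perturbation mechanism, the oscillation error controlled by $\omega(\rho)$, and the role of the Escauriaza $L^p$-ABP inequality is correct; what is wrong is the object on which the iteration acts and, consequently, the intermediate $W^{2,p}$ conclusion.
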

\begin{remark}[Natural scaling] \label{scalare} Let us recall the basic scaling properties of equation $F(x,D^2u)=f$; indeed, if $u$ is an $L^{p}$-viscosity solution of $F(x,D^2u)=f$ in the ball $B_r(x_0) \subset \Omega$, then $\tilde u$ is an $L^{p}$-viscosity solution of $\tilde F(x,D^2\tilde u)=\tilde f$ in $B_1 \equiv B_1(0)$, where
$$
    \widetilde u(x) = \frac{u(x_0 +r x)}{Ar}\,, \quad \widetilde F (x, X ) = \frac{r}{A}  F(x_0+rx , (A/r) X )\,, \quad \widetilde f(x) = \frac{r }{A}f(x_0+rx)\,,
    $$
    and $\widetilde \omega(R) = \omega(r R)$. Here $X \in \mathcal{S}(n)$.
\end{remark}
\begin{lemma}\label{cah} Let $u$ be an $L^p$-viscosity solution to $F(x,D^2u) = f$, $f \in L^p(B_1)$. For every $M\geq 1$  and $\ep>0$ there is a constant $\theta \in (0,1)$ depending only on $n,p,\lambda,\Lambda,M,\ep$ such that if
\begin{equation}\label{beta small 1}
 \|u\|_{L^\infty(B_1)} \leq M\,, \quad \left(\mean{B_1} |f(x)|^p \, dx \right)^{1/p} \leq \theta \,, \quad \textrm{and} \quad \omega(1) \leq \theta
\end{equation}
hold, then there is a function $h$ solving $(F)_{B_1}(D^2 h) = 0$ and satisfying $\|u-h\|_{\infty} \leq \eps$.
\end{lemma}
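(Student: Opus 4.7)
The approach is a standard compactness-by-contradiction argument in the spirit of Caffarelli, adapted to the $L^p$-viscosity framework of Caffarelli--Crandall--Kocan--\'Swi\c{e}ch. Suppose the claim fails. Then we can fix $M \geq 1$ and $\ep > 0$ and produce a sequence of $L^p$-viscosity solutions $u_k$ of $F_k(x, D^2 u_k) = f_k$ in $B_1$, with associated $\omega_k(\cdot)$, such that $\|u_k\|_{L^{\infty}(B_1)} \leq M$, $\|f_k\|_{L^p(B_1)} \leq 1/k$, and $\omega_k(1) \leq 1/k$, yet $\|u_k - h\|_{L^{\infty}} > \ep$ for every $h$ solving $(F_k)_{B_1}(D^2 h) = 0$. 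The goal is to extract convergent subsequences and construct a genuine $h_k$ close to $u_k$, giving the contradiction.

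First I would set up compactness. By the interior H\"older estimates for $L^p$-viscosity solutions (a consequence of Theorem \ref{thm:Swiech} combined with the standard Morrey embedding, or more directly of Krylov--Safonov/Caffarelli $C^{0,\alpha}$ theory), the sequence $u_k$ is equicontinuous on $\overline{B_{3/4}}$ with a uniform modulus depending only on $n, \lambda, \Lambda, p, M$. By Arzel\`a--Ascoli, up to a subsequence, $u_k \to u_{\infty}$ uniformly on $\overline{B_{3/4}}$. Simultaneously, the averaged operators $(F_k)_{B_1}(\cdot) \colon \mathcal{S}(n) \to \er$ all satisfy \rif{ell}, hence are uniformly Lipschitz with constant $\Lambda$ on $\mathcal{S}(n)$ and vanish at $0$; another Arzel\`a--Ascoli extraction gives $(F_k)_{B_1} \to F_{\infty}$ locally uniformly on $\mathcal{S}(n)$, with $F_{\infty}$ again $(\lambda,\Lambda)$-uniformly elliptic.

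Next I would identify the limiting equation. The key observation is that for every fixed $X \in \mathcal{S}(n)$ and every ball $B_{\varrho}(y) \subset B_1$,
\[
\mean{B_{\varrho}(y)} |F_k(x,X) - F_{\infty}(X)|\, dx
\;\leq\; \omega_k(1)\,|X| + |(F_k)_{B_{\varrho}(y)}(X) - F_{\infty}(X)|,
\]
and both terms vanish as $k \to \infty$ (the first by hypothesis, the second by uniform convergence of the averaged operators on compacts). Combined with $\|f_k\|_{L^p} \to 0$ and $u_k \to u_{\infty}$ uniformly, the $L^p$-viscosity stability theorem of Caffarelli--Crandall--Kocan--\'Swi\c{e}ch (see \cite{CCKS}) yields that $u_{\infty}$ is a viscosity solution of $F_{\infty}(D^2 u_{\infty}) = 0$ in $B_{3/4}$.

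Finally, I would produce the competitor $h_k$. Solve the Dirichlet problem for $(F_k)_{B_1}(D^2 h_k) = 0$ in $B_{3/4}$ with $h_k = u_{\infty}$ on $\partial B_{3/4}$, via Perron's method (uniqueness from the ABP, existence using the global barriers available for constant-in-$x$ uniformly elliptic operators). The solutions $h_k$ are equi-H\"older continuous up to $\partial B_{3/4}$ with modulus controlled by that of $u_{\infty}$ and by $\lambda,\Lambda$, so again a subsequence converges uniformly to some $h_{\infty}$. Applying viscosity stability once more, now to the constant-coefficient sequence $(F_k)_{B_1} \to F_{\infty}$, we obtain that $h_{\infty}$ solves $F_{\infty}(D^2 h_{\infty}) = 0$ in $B_{3/4}$ with boundary data $u_{\infty}$. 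Uniqueness for this Dirichlet problem (again ABP) forces $h_{\infty} = u_{\infty}$. Consequently
\[
\|u_k - h_k\|_{L^{\infty}(B_{3/4})}
\;\leq\; \|u_k - u_{\infty}\|_{L^{\infty}(B_{3/4})} + \|h_k - u_{\infty}\|_{L^{\infty}(B_{3/4})} \longrightarrow 0,
\]
contradicting $\|u_k - h_k\|_{\infty} > \ep$ for large $k$, since each $h_k$ is a valid candidate.

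The step I expect to be the most delicate is the application of $L^p$-viscosity stability in the presence of measurable, only-BMO-in-$x$ coefficients: one must verify that the averaged convergence of $F_k(\cdot,X)$ to $F_{\infty}(X)$ guaranteed by $\omega_k(1) \to 0$ fits the hypotheses of \cite{CCKS}, and this is where the specific form of the oscillation functional $\omega$ used here is essential. Everything else is standard compactness.
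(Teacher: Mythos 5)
The paper does not actually prove Lemma \ref{cah}: Section 2 announces that it will ``recall a few standard results about viscosity solutions,'' and this approximation lemma is one of them, a folklore fact in the $W^{2,p}$-viscosity literature (compare \cite{CC}, \cite{CCKS}, \cite{S}); the only argument the paper itself supplies in this section is for Lemma \ref{main lemma}. So there is no paper-internal proof to compare against, and your proposal stands on its own merits. Your compactness-by-contradiction argument is exactly the expected route, and it is essentially sound. You rightly single out the one genuinely delicate step, namely whether the $L^1$-averaged coefficient smallness $\omega_k(1)\to 0$ suffices to invoke the $L^p$-viscosity stability theory of \cite{CCKS}. It does: \rif{ell} together with $F(x,0)=0$ forces the uniform pointwise bound $|F_k(x,X)|\leq n\Lambda|X|$, so for each fixed $X$ the $L^1$-smallness of $x\mapsto F_k(x,X)-F_\infty(X)$ on $B_{3/4}$ (which follows from $\omega_k(1)\to 0$ together with $(F_k)_{B_1}(X)\to F_\infty(X)$) upgrades to $L^p$-smallness for every finite $p$ by interpolation with the uniform $L^\infty$ bound, and the passage from a fixed matrix $X$ to a varying $D^2\phi(x)$ for $W^{2,p}$ test functions $\phi$ then uses the uniform Lipschitz property of the operators in the matrix variable, again from \rif{ell}. (In your displayed inequality, the second term involves $(F_k)_{B_\varrho(y)}$ rather than $(F_k)_{B_1}$; this is still fine for fixed $\varrho$, since $|(F_k)_{B_\varrho(y)}(X)-(F_k)_{B_1}(X)|\leq\varrho^{-n}\omega_k(1)|X|$, but the convergence is not uniform as $\varrho\to 0$, so one should fix the scale at which the stability theorem is applied.) One further point worth flagging: you establish $\|u-h\|_\infty\leq\ep$ on $B_{3/4}$, whereas the paper's subsequent application in the proof of Lemma \ref{main lemma} writes $\|u-h\|_{L^\infty(B_1)}$ and $\|h\|_{L^\infty(B_1)}$. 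This is a harmless imprecision on the paper's side --- the argument there only needs $h$ on a ball compactly containing $B_{1/2}$, such as your $B_{3/4}$ --- but the domain should be made explicit when quoting the lemma.
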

For the rest of the paper, we consider the {\em excess functional}
$$
E_q(B) := \left(\mean{B} |Du - (Du)_{B}|^q \, dx \right)^{1/q}\,,
$$
with $B \subset \Omega$ denoting a ball with positive radius and $q\geq 1$,  where in general $u$ is a solution of the equation $F(x,D^2u)=f$, and its identity will be clear from the context. We then have the following crucial {\em decay estimate}:
\begin{lemma} \label{main lemma}
Let $u$ be an $L^p$-viscosity solution to $F(x,D^2u) = f$, $f \in L^p(B_1)$. Let $n< q < n p/(n-p)$ when $p<n$ and $q>n$ otherwise. There are constants $\theta,\sigma \in (0,1)$, both depending only on $n,p,\lambda,\Lambda,q$, such that
\begin{equation*}\label{beta small comp}
E_q(B_1) \leq 1\,,  \quad \left(\mean{B_1} |f(x)|^p \, dx \right)^{1/p} \leq \theta\,, \quad \omega(1)\leq \theta
\quad \Longrightarrow \quad
E_q(B_\sigma)\leq 1/3\,.
\end{equation*}
\end{lemma}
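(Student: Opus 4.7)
The plan is a standard approximation-and-compactness argument. After normalizing away the affine part of $u$, I would compare with a solution $h$ of the frozen operator equation via Lemma \ref{cah}, use the universal $C^{1,\alpha}$ theory for $h$ to extract its first-order Taylor polynomial $P$ at the origin, and combine the $L^\infty$-closeness of the affinely corrected $u$ to $h$, and of $h$ to $P$, to control $E_q(B_\sigma)$ for $u$ through the $W^{1,q}$-estimate of Theorem \ref{thm:Swiech} applied to $u-P$.

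Concretely, set $v(x) := u(x) - (u)_{B_1} - (Du)_{B_1}\cdot x$, so that $v$ still solves $F(x,D^2 v)=f$ (as $D^2v = D^2 u$), $(v)_{B_1}=(Dv)_{B_1}=0$, and the assumption $E_q(B_1)\leq 1$ reads $\|Dv\|_{L^q(B_1)} \leq c$. Since $q>n$, the Morrey--Poincar\'e inequality then gives a universal $M_0$ with $\|v\|_{L^\infty(B_1)} \leq c \|Dv\|_{L^q(B_1)} \leq M_0$. For an $\eps>0$ to be fixed later, Lemma \ref{cah} provides $h$ with $(F)_{B_1}(D^2 h) = 0$ and $\|v-h\|_{L^\infty(B_1)} \leq \eps$, provided $\theta$ is small in terms of $M_0$ and $\eps$. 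Since $(F)_{B_1}$ is constant in $x$, satisfies the Pucci bounds, and annihilates the zero matrix, Caffarelli's interior theory (see \cite{CC}) yields $\|Dh\|_{C^{0,\alpha}(B_{1/2})}\leq c\|h\|_{L^\infty(B_1)} \leq c(M_0+1)$ for a universal $\alpha = \alpha(n,\lambda,\Lambda)\in(0,1)$. Taking $P(x) := h(0) + Dh(0) \cdot x$, one has $\sup_{B_\sigma} |h-P| \leq c\sigma^{1+\alpha}$ for $\sigma\in(0,1/2]$, and therefore $\sup_{B_\sigma} |v-P| \leq \eps + c\sigma^{1+\alpha}$.

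The function $w:=v-P$ solves the same equation as $v$ (since $D^2 P = 0$). Rescaling $w$ from $B_\sigma$ to $B_1$ as in Remark \ref{scalare} preserves all hypotheses of Theorem \ref{thm:Swiech}, because $\widetilde{\omega}(1) = \omega(\sigma) \leq \omega(1) \leq \theta$, so applying that theorem to the rescaled equation yields
$$
\Bigl(\mean{B_{\sigma/2}} |Dw|^q \, dx\Bigr)^{1/q} \leq c\sigma^{-1}(\eps + c\sigma^{1+\alpha}) + c\sigma^{1-n/p}\theta,
$$
using $\bigl(\mean{B_\sigma}|f|^p\,dx\bigr)^{1/p}\leq \sigma^{-n/p}\theta$. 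Since $Dw$ differs from $Du$ only by a constant vector, the $L^q$ mean-minimization property gives $E_q(B_{\sigma/2}) \leq 2\bigl(\mean{B_{\sigma/2}}|Dw|^q\,dx\bigr)^{1/q}$. To close, I would fix $\sigma$ so that $2c\sigma^\alpha \leq 1/9$, then $\eps$ so that $2c\sigma^{-1}\eps \leq 1/9$, and finally take $\theta$ small enough (and below the $\theta$ required by both Lemma \ref{cah} and Theorem \ref{thm:Swiech}) so that $2c\sigma^{1-n/p}\theta \leq 1/9$, giving $E_q(B_{\sigma/2}) \leq 1/3$, and the lemma after relabeling $\sigma/2$ as $\sigma$. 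The main obstacle is this cascading choice of parameters: because the compactness in Lemma \ref{cah} requires an a priori $L^\infty$ bound on $v$, the hypothesis $q>n$ is essential, as it is precisely what allows Morrey--Poincar\'e to promote a gradient $L^q$ bound into a uniform bound on $v$.
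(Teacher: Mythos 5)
Your proof is correct and follows essentially the same route as the paper: normalize away the affine part of $u$, use Morrey to get a universal $L^\infty$ bound, approximate by a solution $h$ of the frozen equation via Lemma \ref{cah}, invoke the $C^{1,\alpha}$ theory for $h$ to extract the first-order Taylor polynomial, and then close the estimate by applying the scaled version of Theorem \ref{thm:Swiech} to the affinely corrected solution, choosing $\sigma$, then $\eps$, then $\theta$ in that order. The only cosmetic differences are that the paper scales from $B_{2\sigma}$ (avoiding the relabeling step) and denotes the affine function by $\ell$ rather than $P$.
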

\begin{proof} First notice that since $p>n_{\rm{E}}> n/2$,  it follows that when $p<n$ then we have $np/(n-p)>n$. Therefore the condition $n< q < n p/(n-p)$ is non-empty.
Without loss of generality, we may assume that $(u)_{B_1} = 0$ and $(Du)_{B_1} = 0$ (otherwise consider $\widetilde u = u - (u)_{B_1} - (Du)_{B_1}\cdot x$, which solves the same equation as $u$). From Morrey's inequality it follows that
\[
\|u\|_{L^\infty(B_1)} \leq c \left(\mean{B_1} |Du|^q \, dx \right)^{1/q} = c \, E_q(B_1) \leq c \equiv M
\]
with $c \equiv c(n,q)$.
Thus Lemma~\ref{cah} implies that for any $\eps \in (0,M)$ we find $\theta \in (0,1)$ and $h$ solving $(F)_{B_1}(D^2 h) = 0$ such that $\|u-h\|_{\infty} \leq \eps$. Since
\[
\|h\|_{L^{\infty}(B_1)} \leq \|u\|_{L^{\infty}(B_1)}  + \|u-h\|_{L^{\infty}(B_1)} \leq M + \eps \leq 2M\,,
\]
the $C^{1,\alpha}$-regularity estimate (see \cite[Corollary 5.7]{CC} and recall that we are assuming $F(x,0)=0$ that implies $(F)_{B_1}(0) = 0$) for some positive $\alpha$ depending only on $n,\lambda,\Lambda$ gives
$$
\|h\|_{C^{1,\alpha}(B_{1/2})} \leq c \|h\|_{L^{\infty}(B_1)} \leq cM\,.
$$
It follows that there is an affine function $\ell (= h(0) + Dh(0)\cdot x)$ such that
$
\|h-\ell \|_{L^{\infty}(B_{2\sigma})} \leq cM \sigma^{1+\alpha}
$
for all $\sigma \in (0,1/2)$. Now, $u - \ell$ still solves $F(x,D^2u) = f$ and therefore, after scaling in Theorem \ref{thm:Swiech} as in Remark~\ref{scalare}, we obtain
\[
\left(\mean{B_{\sigma}} |D(u-\ell)|^{q} \, dx \right)^{1/q} \leq \frac{c}{\sigma} \|u-\ell \|_{L^{\infty}(B_{2\sigma})}
+ c \sigma \left(\mean{B_{2\sigma}} |f|^{p} \, dx \right)^{1/p}\,.
\]
Inserting here the elementary inequalities
\[
\|u-\ell \|_{L^{\infty}(B_{2\sigma})} \leq \|u-h \|_{L^{\infty}(B_{2\sigma})} + \|h-\ell \|_{L^{\infty}(B_{2\sigma})} \leq \eps + cM \sigma^{1+\alpha}\,,
\]
\[
\left(\mean{B_{2\sigma}} |f|^{p} \, dx \right)^{1/p} \leq c\sigma^{-n/p} \left(\mean{B_{1}} |f|^{p} \, dx \right)^{1/p} \leq c \sigma^{-n/p} \theta\,,
\]
and
\[
\left(\mean{B_{\sigma}} |Du-(Du)_{B_{\sigma}}|^{q} \, dx \right)^{1/q}  \leq 2 \left(\mean{B_{\sigma}} |D(u-\ell)|^{q} \, dx \right)^{1/q}\,,
\]
we conclude that
\[
\left(\mean{B_{\sigma}} |Du-(Du)_{B_{\sigma}}|^{q} \, dx \right)^{1/q}  \leq c(\eps/\sigma + M \sigma^{\alpha} + \sigma^{1-n/p} \theta)\,.
\]
The result follows by taking first $\sigma$ sufficiently small, then $\eps$ small (both depending on $n,p,\lambda, \Lambda,q$ but not on $\theta$) and finally $\theta$ small enough.
\end{proof}
\subsection{Relevant function spaces}\label{spazi} The Lorentz space $L(q,\gamma)(\Omega)$, with $1\leq q< \infty$, $0<  \gamma \leq \infty$ and $\Omega \subseteq \er^n$ being an open subset, is defined prescribing that a measurable map $g$ belongs to $L(q,\gamma)(\Omega)$ iff
\eqn{lonorm}
$$\int_0^{\infty}
\left(\lambda^q|\{x \in \Omega \ : \ |g(x)|> \lambda \}|\right)^{\gamma/q} \, \frac{d\lambda}{\lambda}< \infty$$
when $\gamma < \infty$; for $\gamma=\infty$ the membership to $L(q,\infty)(\Omega)\equiv \MM^{q}(\Omega)$ is 
instead settled by
\eqn{dema}
$$
\sup_{\lambda > 0} \, \lambda^q|\{x \in \Omega \ : \ |g(x)|> \lambda\}|< \infty\;.
$$
This is the so-called Marcinkiewicz, or weak-$L^q$ space. It readily follows for the definitions above that
\eqn{basic}
$$
f \in L(q, \gamma) \Longrightarrow |f|^p \in L(q/p, \gamma/p) \qquad \mbox{for}\ p \leq q\,.
$$
As for Morrey spaces we have that a map $g \colon \er^n \to \er$ 
 belongs to the Morrey space $L^{q, s}$ for $q \geq 1$ and $0 \leq s \leq n$ iff
$$
\sup_{B_{\varrho} \subset \er^n}\, \varrho^{s}\mean{B_\varrho} |g|^q\, dx< \infty\,.
$$
We finally recall the definition of BMO (Bounded Mean Oscillation) and VMO (Vanishing Mean Oscillation) spaces. For a possibly vector valued map $g\in L^1(\Omega)$, define
$$
\omega_g(R):= \sup_{B_\varrho \subset \Omega, \varrho \leq R} \mean{B_{\varrho}} |g-(g)_{B_{\varrho}}|\, dx\,.
$$
If $\limsup_{R\to 0} \omega_g(R) < \infty$ then $g \in$ BMO$(\Omega)$; if $\lim_{R\to 0} \omega_g(R) =0$ then $g \in$ VMO$(\Omega)$. BMO and VMO functions have been introduced in \cite{JN} and \cite{Sa}, respectively. The borderline role of BMO stems not only by the (strict) inclusions $L^{\infty}\subset$ BMO $\subset L^{q}$ which 
hold true for every $q< \infty$, but also by the relevant role this space plays in interpolation theory and in problems with critical growth, where in many situations it properly replaces $L^\infty$. 

We remark that the local versions of all the spaces above can be defined in an 
obvious manner by saying that whenever $X$ denotes a function space of the ones considered above, then $g \in X(\Omega)$ locally iff $g \in X(\Omega')$ whenever $\Omega' \Subset \Omega$.

\section{Proof of results}

\begin{proof}[Proof of Theorem \ref{main1}]
Fix $q>n$. Accordingly, we find $\theta,\sigma \in (0,1/2)$ as in Lemma~\ref{main lemma} depending only on $n,p,\lambda,\Lambda, q$. The assumptions of  the Theorem guarantee that for this $\theta$ there is $R_\theta>0$ such that $\omega(R_\theta) \leq \theta$. Next, we consider the sequence of shrinking balls $B_{i}:= B_{r_i}$ whenever $i \geq 0$ is an integer and $r_i := \sigma^ir/2$. The radius $r$ satisfies $r<R_\theta$.
Define now, for a positive parameter $\tilde \eps$,
$$
A_{i}:= E_{q}(B_i)  + \frac{r_i}{\theta} \left(\mean{B_i} |f(x)|^p \, dx \right)^{1/p} +
\tilde \eps\,.
$$
Consider the scaled solution in $\tilde u_i\equiv \tilde u$ defined according to the scaling described in Remark~\ref{scalare}, with $A \equiv A_i$ and $r\equiv r_i$. We can apply Lemma~\ref{main lemma}, using also assumption $\omega(r)\leq \theta$, and, after scaling back to $u$ in $B_i$ and eventually letting $\tilde \eps \to 0$, get
\eqn{servedopo}
$$
E_{q}(B_{i+1})\leq \frac13 E_{q}(B_{i})  + \frac{1}{3\theta} r_i \left(\mean{B_i} |f(x)|^p \, dx \right)^{1/p}
$$
for every $i \geq 0$. Adding up the previous inequalities yields
$$
\sum_{i=1}^{j+1}E_{q}(B_{i}) \leq \frac13 \sum_{i=0}^{j}E_{q}(B_{i}) +  \frac{1}{3\theta}\sum_{i=0}^{j} r_i \left(\mean{B_i} |f(x)|^p \, dx \right)^{1/p}
$$
whenever $j \geq 0$ so that
\eqn{servedopo2}
$$
\sum_{i=0}^{j+1}E_{q}(B_{i}) \leq \frac32 E_q(B_0) + \frac{1}{2\theta}\sum_{i=0}^{\infty} r_i \left(\mean{B_i} |f(x)|^p \, dx \right)^{1/p}$$
follows.
Using H\"older's inequality we also get
\begin{eqnarray}
\nonumber  \sum_{i=0}^{j+1}|(Du)_{B_{i+1}}-(Du)_{B_{i}}| &  \leq &  \sum_{i=0}^{j+1}  \left(\mean{B_{i+1}} |Du - (Du)_{ B_i}|^q \, dx \right)^{1/q}\\
 &  \leq &  \sigma^{-n/q}\sum_{i=0}^{j+1} E_q(B_i) \,. \label{servedopo3}
\end{eqnarray}
Now \rif{servedopo2}-\rif{servedopo3} yield, whenever $j>0$,
\begin{eqnarray*}
 |(Du)_{B_{j+1}}|  &\leq &   \sum_{i=0}^{j}|(Du)_{B_{i+1}}-(Du)_{B_{i}}|  + |(Du)_{B_0}| 
\\ &\leq  &  \sigma^{-n/q}\sum_{i=0}^{j} E_q(B_i) + |(Du)_{B_0}|  \\
&  \leq &  c \sum_{i=0}^{\infty} r_i \left(\mean{B_i} |f(x)|^p \, dx \right)^{1/p}+ c \left(\mean{B_r} |Du|^q\, dx \right)^{1/q}
\end{eqnarray*} for a constant $c$ depending now only on $n,p,\lambda, \Lambda, q$. Since $x_0$ is a Lebesgue point for $Du$, we obtain
$$
|Du(x_0)| =   \lim_{j \to \infty} |(Du)_{B_{j+1}}|  \leq    c\sum_{i=0}^{\infty} r_i \left(\mean{B_i} |f(x)|^p \, dx \right)^{1/p}+c \left(\mean{B_r} |Du|^q\, dx \right)^{1/q}.
$$
The assertion hence follows since
\begin{eqnarray}  \nonumber
&& \sum_{i=0}^{\infty} r_i \left(\mean{B_i} |f(x)|^p \, dx \right)^{1/p}  = (\log 2)^{-1} r_ 0 \left(\mean{B_{0}} |f(x)|^p \, dx \right)^{1/p} \int_{r/2}^{r} \frac{d\varrho}{\varrho}
\\ \nonumber && \qquad \qquad \quad +
  (-\log \sigma)^{-1} \sum_{i=0}^{\infty}r_ i \left(\mean{B_{i+1}} |f(x)|^p \, dx \right)^{1/p}\int_{\sigma^{i+1}r/2}^{\sigma^i r/2} \frac{d\varrho}{\varrho}
\\  && \qquad \quad  \ \ \leq \left[(\log 2)^{-1} 2^{-n/p} + (-\log \sigma)^{-1} \sigma^{-n/p} \right] \int_{0}^r \left(\mean{B_\varrho} |f(x)|^p \, dx \right)^{1/p} \, d\varrho =
 c \tilde{\bf I}_{p}^{f}(x, r)\,, \label{tri}
\end{eqnarray}
where $c$ depends only on $n,p,\lambda,\Lambda,q$ as also $\sigma$ depends only on these parameters.
\end{proof}
\begin{proof}[Proof of Theorem \ref{main2}]
Let $\Omega' \Subset \Omega'' \Subset \Omega$ be open subsets and $R_d := \dist(\Omega',\Omega'')$ and recall that by Theorem \ref{main1} and a standard covering argument it follows that $Du$ is locally bounded in $\Omega$; therefore $\|Du\|_{L^{\infty}(\Omega'')}< \infty$ and this number will stay fixed throughout all the rest of the proof.

The proof goes in two steps: we first prove that $Du$ is locally VMO-regular; then, using this fact and the convergence of $\tilde {\bf I}_p^{f}(x,r)$ to zero as $r\to 0$ uniformly in $x$, we shall prove that $Du$ is continuous. In the following we keep the terminology and the notation introduced for the proof of Theorem \ref{main1} above, in which we now fix $p$ and $q>n$, $\theta,\sigma \in (0,1/2)$, both depending only on $n,p,\lambda,\Lambda, q$; moreover, we let $R:= \min\{R_d, R_\theta, 1\}/2$.

{\em Step 1: Local VMO-regularity of $Du$}. We take $r \in (0,R)$ and fix for the moment $\tau \in [\sigma r,r]$; note that if $x_0 \in \Omega'$ then $B_{2r}\equiv B_{2r} (x_0) \subset \Omega''$. Define next  the sequence of shrinking  balls $B_{i}:= B_{r_i}(x_0)$ with $r_i := \sigma^{i} \tau$.
Iterating~\eqref{servedopo} gives 
\eqn{interm0}
$$
 E_{q}(B_{i+1})  \leq  \frac{1}{3^{i+1}}E_{q}(B_{0})  + \frac{1}{3\theta} \sum_{k=0}^{i}  \frac{r_{i-k}}{3^k} \left(\mean{B_{i-k}} |f(x)|^p \, dx \right)^{1/p}
$$
for all integers $i\geq 0$ and with $c\equiv c(n,p,\lambda,\Lambda,q)$, so that, proceeding as in~\eqref{tri} to estimate the last sum, we obtain
\eqn{interm}
$$
 E_{q}(B_{i+1}) \leq  \frac{2}{3^{i+1}} \|Du\|_{L^{\infty}(\Omega'')} + \frac{c}{\theta}\,  \tilde {\bf I}_p^f(x_0,2r)\,.
 $$
We are now going to prove that, for a suitable constant $\tilde c \equiv \tilde c(n,p,q,\lambda, \Lambda)$ the following inequality holds whenever $0 < \varrho \leq r \leq R$:
\begin{equation} \label{eq:VMO}
E_{q}(B_\varrho(x_0)) \leq \tilde c \left[ \left( \frac{\varrho}{r} \right)^{\alpha} \|Du\|_{L^{\infty}(\Omega'')}  + \tilde {\bf I}_p^f(x_0,2r) \right]
\end{equation}
where $\alpha := -\log 3/\log\sigma$. Let us first show how the inequality in the previous display implies the VMO-regularity of the gradient under the assumptions of the theorem; indeed, choose $\eps> 0$ and determine $R_1< R$, depending also on $\eps$, such that $\tilde c\tilde {\bf I}_p^f(x_0,2R_1) \leq \eps/2$, uniformly in $x_0$; then choose $R_2< R_1$ again depending on $\eps$, such that $\tilde c(R_2/R_1)^\alpha \|Du\|_{L^{\infty}(\Omega'')}\leq \eps/2$; it turns out that $E_{q}(B_\varrho(x_0)) \leq \eps$ whenever $B_\varrho(x_0) \subset \Omega'$ and $\varrho \leq R_2$ and this in fact means that $Du$ is locally VMO-regular. Notice that $R_2$ depends only on $n,p,q,\lambda, \Lambda,\eps$ and $\|Du\|_{L^{\infty}(\Omega'')}$. It thus remains to show the validity of \rif{eq:VMO}. To this aim notice that it is sufficient to show \rif{eq:VMO} for $\varrho < \sigma r $; indeed, in the case $\varrho  \in  [\sigma r, r]$ estimate \rif{eq:VMO} trivially follows by estimating
$$E_{q}(B_\varrho(x_0)) \leq \sigma^{-n/q-\alpha} (\varrho/r)^{\alpha} E_{q}(B_r(x_0))=: \tilde c/2 (\varrho/r)^{\alpha} E_{q}(B_r(x_0)) \leq \tilde c (\varrho/r)^{\alpha}  \|Du\|_{L^{\infty}(\Omega'')}.$$
 We finally analyze the case $\varrho < \sigma r $, when there obviously exists $i \geq  0$ such that $\sigma^{i+2}r \leq \varrho \leq \sigma^{i+1}r$ and so that we can write $\varrho = \sigma^{i+1} \tau$ for some $\tau \in [\sigma r, r]$. At this stage \rif{eq:VMO} follows directly by \rif{interm} with the corresponding choice of $\tau$. Note that the crucial point here is that \rif{interm} actually represents a family of inequalities for the families $B_{i}:= B_{\sigma^i\tau}(x_0)$, and such inequalities hold uniformly with respect to the choice of $\tau \in [\sigma r, r]$ (and $x_0 \in \Omega'$).

{\em Step 2: Continuity of $Du$}. Let $0<\varrho <\tau < r< R$ and let $j\geq 0$ be an integer such that $\sigma^{j+1}\tau \leq \varrho < \sigma^j \tau$. Define the dyadic sequence of balls as $B_i := B_{r_i}(x_0)$, $i=0,1,\ldots$, where $r_i = \sigma^i \tau$.
Estimating
\[
|(Du)_{B_\varrho(x_0)} - (Du)_{B_j}|  \leq \mean{B_\varrho(x_0)} |Du-(Du)_{B_j}| \, dx \leq  \sigma^{-n/q} E_q(B_j)
\]
leads to (we consider the case $j \geq 1$ otherwise we use the previous estimate)
\begin{eqnarray*}
&& |(Du)_{B_\varrho(x_0)} - (Du)_{B_\tau(x_0)}| \\ && \quad  \leq  |(Du)_{B_\varrho(x_0)} - (Du)_{B_j}|
+ \sum_{i=0}^{j-1}|(Du)_{B_{i+1}}-(Du)_{B_{i}}| \leq 2 \sigma^{-n/q} \sum_{i=0}^{\infty} E_q(B_j)\,,
\end{eqnarray*}
where we applied \rif{servedopo3}.
Recalling~\rif{servedopo2} - letting $j \to \infty $ there - and~\rif{tri} we get
\[
|(Du)_{B_\varrho(x_0)} - (Du)_{B_\tau(x_0)}| \leq  c E_q(B_\tau(x_0)) + c \, \tilde {\bf I}_p^f(x_0,2\tau)\,.
\]
In turn, merging this with~\eqref{eq:VMO} gives
\eqn{letr}
$$
|(Du)_{B_\varrho(x_0)} - (Du)_{B_\tau(x_0)}| \leq  c \left[\left( \frac{\tau}{r} \right)^{\alpha} \|Du\|_{L^{\infty}(\Omega'')}  + \tilde {\bf I}_p^f(x_0,2r) \right]
$$
for all $x_0 \in \Omega'$ and $0<\varrho<\tau<r<R$.
Fix now $\eps>0$. Using assumptions of the theorem we first find $R_3$ such that $c \tilde {\bf I}_p^f(x_0,2R_3) \leq \eps/2$ and then $R_4< R_3$ such that $c (R_4/R_3)^\alpha \|Du\|_{L^{\infty}(\Omega'')} \leq \eps/2$ holds. We conclude that
$$|(Du)_{B_\varrho(x_0)} - (Du)_{B_\tau(x_0)}| \leq \eps$$ provided $\varrho, \tau \leq R_4$, 
for every $x_0 \in \Omega'$. This implies that $((Du)_{B_s(x_0)})_{s< R}$ is a Cauchy net - uniformly with respect to $x_0 \in \Omega'$ - and as such the limit $ \lim_{s \to 0} (Du)_{B_s(x_0)}$ exists for {\em every} $x_0 \in \Omega'$. Notice that in this way $Du$ becomes pointwise defined at every point since, as usual, it can be identified with its precise representative. Moreover,  since the above argument is uniform in $x_0 \in \Omega'$ and the maps $x_0 \mapsto (Du)_{B_s(x_0)}$ are continuous for each fixed $s$, we immediately conclude that $Du$ is continuous in $\Omega'$ being the uniform limit of continuous maps. Finally, since the choice of $\Omega'$ is arbitrary we conclude that $Du$ is continuous in $\Omega$. It remains to prove \rif{contd}, that is, it remains to give an estimate for the modulus of continuity of $Du$. To this aim we start letting $\varrho \to 0$ in \rif{letr}, thereby getting
\begin{equation}\label{eq:pointwise}
|Du(x_0) - (Du)_{B_\tau(x_0)}| \leq  c \left[\left( \frac{\tau}{r} \right)^{\alpha} \|Du\|_{L^{\infty}(\Omega'')}  + \tilde {\bf I}_p^f(x_0,2r) \right]
\end{equation}
for all $x_0\in \Omega'$ and $0<\tau<r<R$. Next, we consider $x_1,x_2 \in \Omega'$ such that $|x_1-x_2| < R/4$ and let
$\tau = 2|x_1-x_2|$. We first estimate
\begin{eqnarray}
\nonumber
|Du(x_1) - Du(x_2)| & \leq & |Du(x_1) - (Du)_{B_{\tau/2}(x_1)}| + |Du(x_2) - (Du)_{B_{\tau/2}(x_2)}|
\\ \nonumber && \qquad \qquad + |(Du)_{B_{\tau/2}(x_1)} - (Du)_{B_{\tau/2}(x_2)}| =: \mathcal{T}_1 + \mathcal{T}_2 + \mathcal{T}_3\,.
\end{eqnarray}
The first two terms are bounded above by~\eqref{eq:pointwise} as follows
\[
\mathcal{T}_1 + \mathcal{T}_2 \leq c \left[\left( \frac{\tau}{r} \right)^{\alpha} \|Du\|_{L^{\infty}(\Omega'')}  + \tilde {\bf I}_p^f(x_1,2r) + \tilde {\bf I}_p^f(x_2,2r) \right]
\]
where $r > \tau$ 
satisfies $r < R$.
For $\mathcal{T}_3$ we instead use \eqref{eq:VMO} and get, again for $\tau<r$
\begin{eqnarray}
\nonumber \mathcal{T}_3 & \leq &  \left(\mean{B_{\tau/2}(x_1)} |Du-(Du)_{B_{\tau/2}(x_2)}|^q \, dx \right)^{1/q}
\\ \nonumber & \leq & c E_q(B_{\tau}(x_2)) \leq 2 \left[\left( \frac{\tau}{r} \right)^{\alpha} \|Du\|_{L^{\infty}(\Omega'')}  + \tilde {\bf I}_p^f(x_2,2r) \right]\,.
\end{eqnarray}
Combining the content of the 
last three displays gives that
\begin{eqnarray}
&& 
|Du(x_1) - Du(x_2)|
\leq  c \left[\left( \frac{|x_1-x_2|}{r} \right)^{\alpha} \|Du\|_{L^{\infty}(\Omega'')}  + \tilde {\bf I}_p^f(x_1,2r) + \tilde {\bf I}_p^f(x_2,2r)  \right]
\label{combiest}
\end{eqnarray}
holds whenever $x_1,x_2 \in \Omega'$ are such that $|x_1-x_2| \leq  R/4$; we recall that $R= \min\{R_d, R_\theta, 1\}/2$ is fixed in the beginning of the proof. To arrive at \rif{contd} we start by taking $r=(2|x_1-x_2|/R)^\delta R/2$, which gives by~\eqref{combiest} that
\begin{eqnarray*}
& & |Du(x_1) - Du(x_2)| \leq  c \left[ \left( \frac{|x_1-x_2|}{R} \right)^{(1-\delta)\alpha} \|Du\|_{L^{\infty}(\Omega'')}  + \sup_{x\in \{x_1,x_2\}} \tilde {\bf I}_p^f(x,2|x_1-x_2|^\delta )   \right]\,,
\end{eqnarray*}
and consequently \rif{contd} follows with a new constant $c \leftrightarrow2c R^{-\alpha}$ under the condition $|x_1-x_2| \leq R/4$. Notice that here is the point where the additional dependence of the constant $c$ in \rif{contd} on $\omega(\cdot),\Omega',\Omega''$ appears via the definition of~$R$. In the case $|x_1-x_2| \geq R/4$ we instead obtain~\rif{contd} by simply estimating
\begin{eqnarray*}
|Du(x_1) - Du(x_2)| & \leq & (R/4)^{-\alpha(1-\delta)} |x_1-x_2|^{\alpha(1-\delta)} (|Du(x_1)|+|Du(x_2)|) \\ &\leq & 8 \, R^{-\alpha} |x_1-x_2|^{\alpha(1-\delta)}\|Du\|_{L^{\infty}(\Omega'')}\,.
\end{eqnarray*}
This finishes the proof.
\end{proof}
\begin{proof}[Proof of Theorem \ref{mainvb}] The proof is a straightforward consequence of the arguments used for Theorem \ref{main2}, Step 1, to which we refer for complete notation used here. Indeed, notice that after \rif{interm0}, instead of getting \rif{interm}, we can also estimate as
$$
E_{q}(B_{i+1}) \leq  \frac{2}{3^{i+1} r^{n/q}}\|Du\|_{L^{q}(\Omega'')} + \frac1{\theta}\sup_{\varrho < r} \,  \left(\varrho^{p-n}\int_{B_\varrho} |f(x)|^p \, dx \right)^{1/p}\,.
$$
and, then, as for \rif{eq:VMO}, we can prove that
\begin{equation} \label{eq:VMO2}
E_{q}(B_\varrho(x_0)) \leq \tilde c \left[ \frac{1}{r^{n/q}} \left( \frac{\varrho}{r} \right)^{\alpha} \|Du\|_{L^{q}(\Omega'')} + \sup_{\varrho < r} \,  \left(\varrho^{p-n}\int_{B_\varrho} |f(x)|^p \, dx \right)^{1/p} \right]
\end{equation}
holds whenever $0 < \varrho \leq r \leq R$, again for a constant $\tilde c$ depending only on $n,p,q,\lambda, \Lambda$.
At this stage in order to prove the VMO-regularity of $Du$ we proceed as for Theorem \ref{main2}, Step 1, but using \rif{vaiazero} instead of the fact that $\tilde {\bf I}_p^f(x_0,r)\to 0$ uniformly with respect to $x_0$ when $r\to 0$. As for the BMO-regularity, this immediately follows by \rif{eq:VMO2} once \rif{cafforthodox} is assumed. Finally, observe that in turn \rif{cafforthodox} is implied by the condition $f \in L(n,\infty)$ thanks to the classical H\"older type inequality for Marcinkiewicz spaces $L(n, \infty)$ (compare with the definition in \rif{dema}):
$$
\int_{B_{r}(x_0)} |f|^p\, dx\leq \frac{ \omega_n^{1-p/n}n}{n-p} r^{n-p} \left(\sup_{\lambda\geq 0} \, \lambda|\{x \in B_r(x_0)\, : \, |f|> \lambda\}|^{1/n}\right)^p
$$
which is valid whenever $1\leq p<n$ (see for instance \cite{pisa}).
\end{proof}

\begin{proof}[Proof of Theorem \ref{mainc}] The result is actually a corollary of Theorem \ref{main2} used for a choice $p<n$, once a few basic facts about Lorentz spaces are used. For this we recall that a basic maximal-type characterization of such spaces claims that $g \in L(q,\gamma)$ for $q>1$ and $\gamma>0$ iff
\eqn{mc}
$$
\int_0^\infty \left(g^{**}(\varrho)\varrho^{1/q}\right)^\gamma\, \frac{d\varrho}{\varrho}< \infty\,,$$
where $g^{**}(\varrho):= \varrho^{-1}\int_0^\varrho g^*(t)\, dt$ and $g^*\colon [0,\infty) \to [0,\infty)$ is the non-increasing rearrangement of $g$, that is $g^*(s):=\sup\, \{t\geq 0 \, : \, |\{x \in \er^n \, : \, |g(x)|>t\}|>s\}$ (see for instance~\cite{G}). Now let $g :=|f|^p$ and observe that, for every ball
$B_{\varrho}$, we have by the classical Hardy-Littlewood inequality~\cite{HLP} that
$$
\mean{B_{\varrho}(x_0)}g(y)\, dy \leq \frac{1}{\omega_n\varrho^n}\int_0^{\omega_n\varrho^n} g^{*}(t)\, dt \leq g^{**}(\omega_n\varrho^n)\,,
$$
where $\omega_n$ denotes the measure of the unit ball in $	\er^n$. Integrating the previous inequality yields in turn
$$
\tilde{\bf  I}_{p}^{f}(x,r)\leq   \int_0^r [g^{**}(\omega_n\varrho^n)]^{1/p} \, d\varrho
$$
and changing variables we conclude with
\eqn{convi}
$$
\sup_x \, \tilde{\bf  I}_{p}^{f}(x,r)\leq   \frac{1}{\omega_n^{1/n}}\int_0^{\omega_nr^n} [g^{**}(\varrho)\varrho^{p/n}]^{1/p} \, d\varrho\,.
$$
Now \rif{basic} in particular implies that if $f \in L(n,1)$ then,  $|f|^p = g\in L(n/p,1/p)$ so that the quantity in the right hand side of~\rif{convi} is finite. Keep in mind that this is the point where we need to take $p<n$ in order to use the characterization in \rif{mc} with $q =n/p>1$. As such,   it tends to zero as $r\to 0$ and therefore we infer   that $\tilde{\bf  I}_{p}^{f}(x,r)\to 0$ uniformly with respect to $x$ when $r\to 0$. At this stage we can apply Theorem~\ref{main2} to conclude that $Du$ is continuous and the proof is complete.
\end{proof}

\begin{remark}\label{modl} Combining \rif{contd} and \rif{convi} yields a modulus of continuity for $Du$ in terms of the Lorenz norm of $|f|^p$. Indeed, in the setting of Theorem \ref{main2} we get
\begin{eqnarray*}
&&|Du(x_1) - Du(x_2)| \\ &&
\quad \leq c \|Du\|_{L^\infty(\Omega'')} |x_1-x_2|^{\alpha(1-\delta)}  + c \int_0^{\omega_n4^n|x_1-x_2|^{n\delta}} [(|f|^p)^{**}(\varrho)\varrho^{p/n}]^{1/p} \, d\varrho
\end{eqnarray*}
for all $x_1,x_2 \in \Omega'$, $p<n$, $\delta \in [0,1]$, where $c \equiv c(n,p,\lambda,\Lambda,\Omega',\Omega'', \omega(\cdot))$ and $\alpha = \alpha(n,p,\lambda,\Lambda)$.
\end{remark}

\begin{proof}[Proof of Theorem \ref{mainl}] The idea is to use Wolff and Havin-Maz'ya potentials to reduce the study of properties of the modified Riesz potential $\tilde{\bf  I}_{p}^{f}$ to the one of the usual Riesz potentials and then apply standard results on their mapping properties to deduce suitable mapping properties of $\tilde{\bf  I}_{p}^{f}$; then estimates \rif{lol}-\rif{mol} follow by Theorem \ref{main1} and a standard covering argument. Indeed we notice that
$$
\tilde{\bf  I}_{p}^{f}(x,r)= \omega_n^{-1/p}{\bf W}^{|f|^{p}}_{p/(p+1),p+1}(x,r)\,, \qquad \
$$
where the Wolff potential ${\bf W}^{\mu}_{\beta, p+1}$ of a measure $\mu$ - and therefore of an $L^1$-function - $\mu$ is defined by
$$
{\bf W}^{\mu}_{\beta, p+1}(x,r):= \int_0^r \left(\frac{|\mu|(B(x,\varrho))}{\varrho^{n-\beta (p+1)}}\right)^{1/p}\, \frac{d\varrho}{\varrho}\qquad \qquad  \beta \in (0,n/(p+1)]\,.
$$
In turn, a classical fact established by Havin \& Maz'ya \cite{MH} is that, for the range of exponents $q(=p+1)$ which is of interest here, Wolff potentials can be controlled by so called Havin-Mazya potentials ${\bf V}_{\beta, p+1}(\mu)$ in the sense that the following inequality holds whenever $\beta(p+1)<n$:
\eqn{keyest}
$$
{\bf W}_{\beta, p+1}^{\mu}(x,r) \leq c(n,p,\beta){\bf V}_{\beta, p+1}(\mu)(x):=  c(n,p,\beta)I_{\beta}\left[\left(I_{\beta}(|\mu|)\right)^{1/p}\right](x)\,,
$$
where on the right hand side there appears the standard Riesz potential on $\er^n$:
\eqn{ridef}
$$
I_{\beta}(\mu)(x):= \int_{\er^n} \frac{d \mu(y)}{|x-y|^{n-\beta}}\qquad \qquad  \qquad  \beta \in (0,n]\,.
$$
Therefore we have
\eqn{IV}
$$
\tilde{\bf  I}_{p}^{f}(x, r)\lesssim {\bf V}_{p/(p+1), p+1}(|f|^p)(x) \qquad \forall \ r>0 \,.
$$
Using the previous inequality we deduce that
\eqn{imb}
$$
\tilde{\bf  I}_{p}^{f}(\cdot, r) \in L(nq/(n-q),\gamma)\qquad \forall \ r >0
$$
holds locally by the well-known mapping property of Riesz potentials in Lorentz spaces (see for instance \cite{minann}), that is $$I_{\beta} \colon  L(t, \gamma) \to L(nt/(n-\beta t), \gamma)\qquad \mbox{for}\ \beta t < n, \ t>  1,\ \gamma >0\,,$$ used first with $t =q/p>1$ and then with $t= nq(p+1)/[n(p+1)-q])$, with $\beta = p/(p+1)$, and yet using \rif{basic} repeatedly. At this stage \rif{lol} follows from \rif{imb}, Theorem \ref{main1}
- again applied with some $p<n$ in the range $p \in (n_{\rm{E}}, n)$ - and a standard covering argument. It remains to prove \rif{mol}, for which we follow a similar path, but this time relying on the following basic result due to Adams \cite{adams} describing the behavior of Riesz potentials with respect to Morrey spaces:
\eqn{prima}
$$
g \in L^{t, s} \Longrightarrow I_{\beta}(|g|)\in  L^{s t/(s-\beta t ), s} \qquad \qquad 1<t < s/\beta\,.
$$
The result in \rif{mol} now follows by \rif{IV} and \rif{prima} as for the case of \rif{lol} and using, repeatedly, the fact that $
g \in L^{q,s}$ implies $|g|^p \in L^{q/p,s}$ for $p \leq q$.
\end{proof}

\subsection*{Acknowledgements.} The authors are supported by the NSF grant  0604657, by the ERC grant 207573 ``Vectorial Problems" and by the
Academy of Finland project ``Potential estimates and applications for nonlinear parabolic partial
differential equations".


\begin{thebibliography}{99}

\bibitem {adams} \name[Adams, D.~R.]: A note on Riesz potentials. {\em Duke Math.~J.} 42 (1975),
765--778.

\bibitem {C} \name[Caffarelli, L.]: Elliptic second order equations. {\em Rend.~Sem.
Mat.~Fis.~Milano} 58 (1988), 253--284.

\bibitem {Cann} \name[Caffarelli, L.]: Interior a priori estimates for solutions of
fully nonlinear equations. {\em Ann. of Math. (II)} 130 (1989),
189--213.

\bibitem {CC} \name[Caffarelli, L.]\et\name[Cabr\'e, X.]: {\em Fully nonlinear elliptic equations}. Am. Math. Soc. Coll. Publ. 43. Am. Math. Soc., Providence, RI, 1995.

\bibitem {CCKS} \name[Caffarelli, L.]\et\name[Crandall, M. G.]\et\name[Kocan, M.]\et\name[\'Swiech, A.]: On viscosity solutions of fully nonlinear equations with measurable ingredients. {\em Comm. Pure Appl. Math.} 49 (1996), 365--397.
  
  \bibitem {CaffDuke} \name[Caffarelli, L.]\et\name[Huang, Q.]: Estimates in the generalized Campanato-John-Nirenberg spaces for fully nonlinear elliptic equations. {\em Duke Math. J.} 118 (2003), 1�-17.
      

\bibitem {E} \name[Escauriaza, L.]: $W^{2,n}$-a priori estimates for solutions to fully nonlinear equations. {\em Indiana Univ. Math. J.} 42 (1993), 413--423.

\bibitem {G} \name[Grafakos, L.]: {\em Classical and modern Fourier analysis.} Pearson Edu. Inc., Upper Saddle River, NJ,
2004.



\bibitem {HLP} \name[Hardy, G. H]\et\name[Littlewood, J. E.]\et\name[P\'olya, G.]: {\em Inequalities}. Cambridge University Press, Cambridge, 1988. xii+324

\bibitem {MH} \name[Havin, M.]\et\name[Maz'ya, V. G.]:
A nonlinear potential theory. {\em Russ.~Math.~Surveys} 27 (1972), 71--148.

 \bibitem {JN} \name[John, F.]\et\name[Nirenberg, L.]: On functions of bounded mean oscillation.
{\em Comm.~Pure Appl.~Math.~}14 (1961), 415--426.


\bibitem {KM} \name[Kilpel\"ainen, T.]\et\name[Mal\'y, J.]: The Wiener test and potential
estimates for quasilinear elliptic equations. {\em Acta Math.}~172
(1994), 137--161.

\bibitem {KM1} \name[Kuusi, T.]\et\name[Mingione, G.]: Linear potentials in nonlinear potential theory. Preprint 2011.

\bibitem {KM2} \name[Kuusi, T.]\et\name[Mingione, G.]: A surprising linear type estimate for nonlinear elliptic equations. {\em C.~R.~Acad.~Sci.~Paris
Ser.~I} 349 (2011), 889--892.

\bibitem {pisa} \name[Mingione,, G.]: The Calder\'on-Zygmund theory for elliptic problems with measure data.
{\em Ann.~Scu.~Norm.~Sup.~Pisa Cl.~Sci.~(V)} 6 (2007),
195--261.


\bibitem {minann} \name[Mingione,, G.]: Gradient estimates below the duality exponent.
{\em Math. Ann.} 346 (2010), 571--627.

\bibitem {milan} \name[Mingione, G.]: Nonlinear measure data problems. {\em Milan J. Math.} 79 (2011), 429--496.
%

\bibitem {Sa} \name[Sarason, D.]: Functions of vanishing mean oscillation.
{\em Trans.~Amer.~Math.~Soc.} 207 (1975), 391--405.

\bibitem {S} \name[\'Swiech, A.]: $W^{1,p}$-interior estimates for solutions of fully nonlinear, uniformly elliptic equations. {\em Adv. Diff. Equ.} 2 (1997), 1005--1027.

\bibitem{T1} \name[Trudinger, N.S.]: H\"older gradient estimates for fully nonlinear elliptic equations. {\em Proc. Roy. Soc. Edinburgh Sect. A} 108 (1988), 57--65.

\bibitem{T2} \name[Trudinger, N.S.]: On regularity and existence of viscosity solutions of nonlinear second order, elliptic equations. PDE and the Calculus of Variations, Vol. II, 939--957.  {\em Progr. Non. Diff. Equ. Appl.,} 2, Birkh\"auser Boston, Boston, MA, 1989.

\bibitem{TW1} \name[Trudinger, N.S.]\et\name[Wang, X.J.]: On the weak continuity of
elliptic operators and applications to potential theory. {\em
Amer.~J.~Math.} 124 (2002), 369--410.


\bibitem{TW2} \name[Trudinger, N.S.]\et\name[Wang, X.J.]: Quasilinear elliptic equations with signed measure data.
{\em
Disc.~Cont.~Dyn.~Systems A} 23 (2009), 477--494.



\end{thebibliography}
\end{document}